\renewcommand{\turnstile}[6][s]
    {\ifthenelse{\equal{#1}{d}}
        {\sbox{\first}{$\displaystyle{#4}$}
        \sbox{\second}{$\displaystyle{#5}$}}{}
    \ifthenelse{\equal{#1}{t}}
        {\sbox{\first}{$\textstyle{#4}$}
        \sbox{\second}{$\textstyle{#5}$}}{}
    \ifthenelse{\equal{#1}{s}}
        {\sbox{\first}{$\scriptstyle{#4}$}
        \sbox{\second}{$\scriptstyle{#5}$}}{}
    \ifthenelse{\equal{#1}{ss}}
        {\sbox{\first}{$\scriptscriptstyle{#4}$}
        \sbox{\second}{$\scriptscriptstyle{#5}$}}{}
    \setlength{\dashthickness}{0.111ex}
    \setlength{\ddashthickness}{0.35ex}
    \setlength{\leasturnstilewidth}{2em}
    \setlength{\extrawidth}{0.2em}
    \ifthenelse{%
      \equal{#3}{n}}{\setlength{\tinyverdistance}{0ex}}{}
    \ifthenelse{%
      \equal{#3}{s}}{\setlength{\tinyverdistance}{0.5\dashthickness}}{}
    \ifthenelse{%
      \equal{#3}{d}}{\setlength{\tinyverdistance}{0.5\ddashthickness}
        \addtolength{\tinyverdistance}{\dashthickness}}{}
    \ifthenelse{%
      \equal{#3}{t}}{\setlength{\tinyverdistance}{1.5\dashthickness}
        \addtolength{\tinyverdistance}{\ddashthickness}}{}
        \setlength{\verdistance}{0.4ex}
        \settoheight{\lengthvar}{\usebox{\first}}
        \setlength{\raisedown}{-\lengthvar}
        \addtolength{\raisedown}{-\tinyverdistance}
        \addtolength{\raisedown}{-\verdistance}
        \settodepth{\raiseup}{\usebox{\second}}
        \addtolength{\raiseup}{\tinyverdistance}
        \addtolength{\raiseup}{\verdistance}
        \setlength{\lift}{0.8ex}
        \settowidth{\firstwidth}{\usebox{\first}}
        \settowidth{\secondwidth}{\usebox{\second}}
        \ifthenelse{\lengthtest{\firstwidth = 0ex}
            \and
            \lengthtest{\secondwidth = 0ex}}
                {\setlength{\turnstilewidth}{\leasturnstilewidth}}
                {\setlength{\turnstilewidth}{2\extrawidth}
        \ifthenelse{\lengthtest{\firstwidth < \secondwidth}}
            {\addtolength{\turnstilewidth}{\secondwidth}}
            {\addtolength{\turnstilewidth}{\firstwidth}}}
        \ifthenelse{\lengthtest{\turnstilewidth < \leasturnstilewidth}}{\setlength{\turnstilewidth}{\leasturnstilewidth}}{}
    \setlength{\turnstileheight}{1.5ex}
    \sbox{\turnstilebox}
    {\raisebox{\lift}{\ensuremath{
        \makever{#2}{\dashthickness}{\turnstileheight}{\ddashthickness}
        \makehor{#3}{\dashthickness}{\turnstilewidth}{\ddashthickness}
        \hspace{-\turnstilewidth}
        \raisebox{\raisedown}
        {\makebox[\turnstilewidth]{\usebox{\first}}}
            \hspace{-\turnstilewidth}
            \raisebox{\raiseup}
            {\makebox[\turnstilewidth]{\usebox{\second}}}
        \makever{#6}{\dashthickness}{\turnstileheight}{\ddashthickness}}}}
        \mathrel{\usebox{\turnstilebox}}}
\newcommand{\newref}[4][]{
\ifthenelse{\equal{#1}{}}{\newtheorem{h#2}[hthm]{#4}}{\newtheorem{h#2}{#4}[#1]}
\expandafter\newcommand\csname r#2\endcsname[1]{#3~\ref{#2:##1}}
\expandafter\newcommand\csname R#2\endcsname[1]{#4~\ref{#2:##1}}
\expandafter\newcommand\csname n#2\endcsname[1]{\ref{#2:##1}}
\newenvironmentx{#2}[2][1=,2=]{
\ifthenelse{\equal{##2}{}}{\begin{h#2}}{\begin{h#2}[##2]}
\ifthenelse{\equal{##1}{}}{}{\label{#2:##1}}
}{\end{h#2}}
}
\theoremstyle{definition}
\theoremstyle{remark}
\newcommand{\deq}{\equiv}
\newcommand{\repl}{:=}
\newcommand{\idtype}{\rightsquigarrow}
\newcommand{\cat}[1]{\mathbf{#1}}
\newcommand{\C}{\cat{C}}
\newcommand{\Mod}[1]{#1\text{-}\cat{Mod}}
\newcommand{\Th}{\cat{Th}}
\newcommand{\emptyCtx}{\mathbf{1}}
\newcommand{\we}{\mathcal{W}}
\newcommand{\I}{\mathrm{I}}
\newcommand{\J}{\mathrm{J}}
\newcommand{\class}[2]{#1\text{-}\mathrm{#2}}
\newcommand{\Icell}[1][\I]{\class{#1}{cell}}
\newcommand{\Icof}[1][\I]{\class{#1}{cof}}
\newcommand{\Jcell}[1][]{\Icell[\J#1]}
\newcommand{\cyli}{i}
\numberwithin{figure}{section}
\begin{document}

\title{Model Structures on Categories of Models of Type Theories}

\author{Valery Isaev}

\begin{abstract}
Models of dependent type theories are contextual categories with some additional structure.
We prove that if a theory $T$ has enough structure, then the category $\Mod{T}$ of its models carries the structure of a model category.
We also show that if $T$ has $\Sigma$ types, then weak equivalences can be characterized in terms of homotopy categories of models.
\end{abstract}

\maketitle

\section{Introduction}

It is well-known that algebraic models (such as categories with attributes \cite{pitts}, categories with families \cite{cwf} and contextual categories \cite{GAT})
of dependent type theories are related to categories with additional structure.
For example, it was prove in \cite{ext-eq} that models of the type theory with extensional $Id$ and $\Sigma$ types are equivalent (in a weak bicategorical sense)
to finitely complete categories, and if we assume $\Pi$ types, then we obtain an equivalence with locally cartesian closed categories.

Ideas of homotopy type theory suggest that models of dependent type theories with intensional $Id$ types should be related to $\infty$-categories.
There are several results (for example, \cite{shul-inv}, \cite{local-universes}, \cite{kapulkin}) that support this intuition,
but to make this relationship precise we need an appropriate definition of equivalences of models of type theories.

The main contribution of this paper is the construction of a model structure on categories of models of dependent type theories.
We define this model structure for every algebraic dependent type theory (as defined in \cite{alg-tt}) which has enough structure
(essentially, path types and a weak form of the univalence axiom).
Let $T_\Sigma$ be the theory with path types and $\Sigma$ types (see subsection~\ref{sec:sigma} for a precise definition).
Then we can state a formal conjecture:
\begin{conj}
The $(\infty,1)$-category presented by model category $\Mod{T_\Sigma}$ is equivalent to the $(\infty,1)$-category of finitely complete $(\infty,1)$-categories.
\end{conj}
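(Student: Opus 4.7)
The plan is to construct a Quillen equivalence (or, more generally, an equivalence of underlying relative categories) between $\Mod{T_\Sigma}$, equipped with the model structure announced in the paper, and a chosen presentation of finitely complete $(\infty,1)$-categories such as Joyal's tribes, Cisinski's fibration categories, or the left exact quasi-categories. The conjecture then follows by passing to the presented $(\infty,1)$-categories on both sides.

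First, I would build the syntactic direction $L \colon \Mod{T_\Sigma} \to \mathrm{fcCat}_\infty$. Given a model $M$, the underlying contextual category of $M$ together with its display maps as fibrations and path-equivalences as weak equivalences should form a fibration category; its Dwyer--Kan or Rezk localization is then a finitely complete $(\infty,1)$-category $L(M)$. Functoriality in $M$ is direct, and the fact that $L$ sends the weak equivalences $\mathcal{W}$ of this paper to equivalences of $(\infty,1)$-categories should reduce to the characterization of $\mathcal{W}$ in terms of homotopy categories of models promised in the abstract.

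Second, I would produce an inverse $R$ sending a finitely complete $(\infty,1)$-category $\mathcal{E}$ to a model $R(\mathcal{E})$ of $T_\Sigma$. Representing $\mathcal{E}$ by a fibration category and then applying the method of local universes of Lumsdaine--Warren, as adapted in \cite{local-universes} and \cite{kapulkin}, gives a strictification that turns the weak type-theoretic structure carried by $\mathcal{E}$ into the strict substitution laws required by a contextual category; the additional weak univalence-style axiom required by the main theorem should be verifiable from the universality of the local-universe construction. The compositions $LR$ and $RL$ must then be exhibited as pointwise weakly equivalent to the identity.

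The main obstacle will be this strictification together with the coherent verification that $L$ and $R$ are mutually inverse as $\infty$-functors rather than merely pointwise on objects. The Clairambault--Dybjer argument cited in \cite{ext-eq} relies on syntactic tricks specific to the extensional, $1$-categorical setting which do not survive intensionalization; replacing them requires a genuinely universal property for the local-universe construction in the $(\infty,1)$-setting, or equivalently a coherent rigidification of the homotopy-coherent substitution data carried by an arbitrary finitely complete $(\infty,1)$-category. A secondary issue is that different presentations of $\mathrm{fcCat}_\infty$ yield superficially different equivalences, so one must select a presentation compatible with the concrete construction of the model structure on $\Mod{T_\Sigma}$ given here; the fibration-category presentation appears to match most naturally.
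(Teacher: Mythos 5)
The statement you are addressing is stated in the paper as a \emph{conjecture}, and the paper offers no proof of it; the only supporting evidence it supplies is \rprop{sigma-we-ho} (the functor $U : \Mod{T_\Sigma} \to \cat{FibCat}$ preserves and reflects weak equivalences) together with the cited theorem of Szumi{\l}o that fibration categories present finitely complete quasicategories. Your proposal is therefore not being measured against an existing argument, and on its own terms it is not a proof either: it is a research plan whose two essential steps are left unresolved, as you yourself acknowledge. Concretely, (a) the construction of $R$ requires strictifying an arbitrary finitely complete $(\infty,1)$-category into a contextual category modelling $coe_1 + \sigma + Path + wUA + \Sigma$, and verifying the weak univalence axiom $wUA$ from ``the universality of the local-universe construction'' is exactly the kind of step that needs an actual argument rather than an appeal to plausibility; (b) exhibiting $LR$ and $RL$ as coherently equivalent to identities is the heart of the conjecture, and you only name it as ``the main obstacle.''

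There is also a specific technical point your plan glosses over that the paper flags explicitly: the functor $U$ appears \emph{not} to be exact (it seems not to preserve fibrations), so one cannot expect $L$ to be set up as an exact functor of fibration categories, nor as a Quillen functor, without further work; the comparison will likely have to be made at the level of underlying relative categories or their $(\infty,1)$-localizations, where preservation and reflection of weak equivalences is the only structure available. None of this makes your outline wrong as a strategy --- it is essentially the route the paper's introduction gestures toward --- but the conjecture remains open, and what you have written should be presented as a proof strategy, not a proof.
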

Analogous conjectures can be stated for other theories such as the theory with path types, $\Sigma$ types and $\Pi$ types.

It was shown in \cite{szumilo} that the fibration category of fibration categories is equivalent to the fibration category of finitely complete quasicategories.
Thus it is natural to study the relationship between fibration categories and models of $T_\Sigma$.
For every model $X$ of $T_\Sigma$, we can define a fibration category $U(X)$ (see \cite{tt-fibr-cat}), and this correspondence defines a functor $U : \Mod{T_\Sigma} \to \cat{FibCat}$
from the category of models of $T_\Sigma$ to the category of fibration categories.
Since both $\Mod{T_\Sigma}$ and $\cat{FibCat}$ are fibration categories, it is natural to conjecture that $U$ is exact, but it seems that it is not.
The main problem is that it seems that $U$ does not preserve fibrations.
Nevertheless, we can show that $U$ preserves and reflects weak equivalences (see \rprop{sigma-we-ho}), which indicates that $\Mod{T}$ has the correct class of weak equivalences.

We will describe a theory with the interval type and define a model structure on the category of models of theories that have the interval type.
We can define usual $Id$ types in this theory, but it is stronger than the theory of $Id$ types.
For example, function extensionality holds in this theory.
It might be possible to define a model structure on the category of models of theories with $Id$ types,
but it is more difficult, and we do not know how to do it.

The paper is organized as follows.
In section~\ref{sec:HoTT-I}, we define different theories with the interval type and describe several constructions in these theories.
We also define a weak univalence axiom and prove that it implies a part of the usual version of this axiom.
In section~\ref{sec:model-structure}, we define a model structure on the category of models of a dependent type theory
and prove a characterization of weak equivalences in this category.

\section{Theories with an interval type}
\label{sec:HoTT-I}

In this section we describe the theory of an interval type.
We describe several constructions in this theory which we will need later.
In particular, we will show that theories with an interval type and path types also have $Id$ types.
We will use a (slightly informal) named presentation of terms,
from which a formal presentation in terms of De Bruijn indices can be recovered.

We will write $T_1 + T_2$ for the union of theories $T_1$ and $T_2$.
That is $T_1 + T_2 = T_1 \amalg_{T} T_2$, where $T$ is the common subtheory of $T_1$ and $T_2$.
Sometimes we will write $T_1 + T_2$ even if $T_1$ is a subtheory of $T_2$ (in this case, $T_1 + T_2 = T_2$).
For example, it is convenient to use this notation when $T_2$ is $T_1$ together with some additional axiom.

Theories with an interval type are closely related to theories with identity types.
So, let us first recall its definition from \cite{alg-tt}.
Theory $Id$ is a regular theory with the following function symbols:
\begin{align*}
Id & : (tm,n) \times (tm,n) \to (ty,n) \\
refl & : (tm,n) \to (tm,n) \\
J & : (ty,n+3) \times (tm,n+1) \times (tm,n) \times (tm,n) \times (tm,n) \to (tm,n)
\end{align*}
and the following axioms:
\medskip
\begin{center}
\AxiomC{$\Gamma \vdash ty(a) \deq ty(a')$}
\UnaryInfC{$\Gamma \vdash Id(a, a')\ type$}
\DisplayProof
\quad
\AxiomC{}
\UnaryInfC{$\Gamma \vdash refl(a) : Id(a, a)$}
\DisplayProof
\end{center}

\medskip
\begin{center}
\AxiomC{$\Gamma, x : A, y : A, z : Id(x,y) \vdash D\ type$}
\AxiomC{$\Gamma, x : A \vdash d : D'$}
\AxiomC{$\Gamma \vdash p : Id(a,a')$}
\TrinaryInfC{$\Gamma \vdash J(D,d,a,a',p) : D[a,a',p]$}
\DisplayProof
\end{center}
where $D' = D[y \repl x, z \repl refl(x)]$ and $A = ty(a)$.

\medskip
\begin{center}
\AxiomC{$\Gamma, x : ty(a), y : ty(a), z : Id(x,y) \vdash D\ type$}
\AxiomC{$\Gamma, x : ty(a) \vdash d : D'$}
\BinaryInfC{$\Gamma \vdash J(D,d,a,a,refl(a)) \deq d[a]$}
\DisplayProof
\end{center}
\medskip
where $D' = D[y \repl x, z \repl refl(x)]$.

We will also need slightly weaker version of $Id$ which we will denote by $Id_-$.
It has all of the function symbols of $Id$ together with the following one:
\[ Jeq : (tm,n+3) \times (tm,n+1) \times (tm,n) \to (tm,n) \]
Theory $Id_-$ has all of the axioms of $Id$ except the last one; instead it has the following additional axiom:
\begin{center}
\AxiomC{$\Gamma, x : A, y : A, z : Id(x,y) \vdash D\ type$}
\AxiomC{$\Gamma, x : A \vdash d : D'$}
\AxiomC{$\Gamma \vdash a : A$}
\TrinaryInfC{$\Gamma \vdash Jeq(D,d,a) : Id(J(D,d,a,a,refl(a)),d[a])$}
\DisplayProof
\end{center}
where $D' = D[y \repl x, z \repl refl(x)]$.
The idea is that the last axiom of $Id$ holds in $Id_-$ only propositionally.

Now, we can define the theory of the interval type.
Actually, there are several different ways to define such theory.
These theories are not isomorphic, but should be equivalent in some weaker sense.
First, let us define the most basic theory which has only the interval type and its constructors, but lacks any kind of eliminator for it.
Theory $I$ is a regular theory with function symbols $I : (ty,n)$, $left : (tm,n)$, $right : (tm,n)$, and the following axioms:
\begin{center}
\AxiomC{}
\UnaryInfC{$\Gamma \vdash I\ type$}
\DisplayProof
\quad
\AxiomC{}
\UnaryInfC{$\Gamma \vdash left : I$}
\DisplayProof
\quad
\AxiomC{}
\UnaryInfC{$\Gamma \vdash right : I$}
\DisplayProof
\end{center}

There are at least three different ways to define an eliminator for $I$.
The idea always the same: given a fibration over $I$ and a point in the fibre over
some point $i : I$, we can transport it to the fibre over some other point $j : I$.
In different eliminators, we can take different $i$ and $j$.
In $coe_0$, we can only take $i = left$ and $j = right$.

\medskip
\begin{center}
\AxiomC{$\Gamma, x : I \vdash D\ type$}
\AxiomC{$\Gamma \vdash d : D[x \repl left]$}
\BinaryInfC{$\Gamma \vdash coe_0(\lambda x.\,D, d) : D[x \repl right]$}
\DisplayProof
\end{center}

In $coe_1$, we can take $i = left$ and arbitrary $j$.

\medskip
\begin{center}
\AxiomC{$\Gamma, x : I \vdash D\ type$}
\AxiomC{$\Gamma \vdash d : D[x \repl left]$}
\AxiomC{$\Gamma \vdash i : I$}
\TrinaryInfC{$\Gamma \vdash coe_1(\lambda x.\,D, d, i) : D[x \repl i]$}
\DisplayProof
\end{center}

\medskip
\begin{center}
\AxiomC{$\Gamma, x : I \vdash D\ type$}
\AxiomC{$\Gamma \vdash d : D[x \repl left]$}
\BinaryInfC{$\Gamma \vdash coe_1(\lambda x.\,D, d, left) \deq d$}
\DisplayProof
\end{center}

In $coe_2$, both $i$ and $j$ may be arbitrary.

\medskip
\begin{center}
\AxiomC{$\Gamma, x : I \vdash D\ type$}
\AxiomC{$\Gamma \vdash i : I$}
\AxiomC{$\Gamma \vdash d : D[x \repl i]$}
\AxiomC{$\Gamma \vdash j : I$}
\QuaternaryInfC{$\Gamma \vdash coe_2(\lambda x.\,D, i, d, j) : D[x \repl j]$}
\DisplayProof
\end{center}

\medskip
\begin{center}
\AxiomC{$\Gamma, x : I \vdash D\ type$}
\AxiomC{$\Gamma \vdash d : D[x \repl left]$}
\BinaryInfC{$\Gamma \vdash coe_2(\lambda x.\,D, left, d, left) \deq d$}
\DisplayProof
\end{center}

It turns out that $coe_0$ is too weak.
To make it equivalent to other two theories, we need to add regular theory $sq$ to it,
which has one function symbol $sq : (tm,n) \times (tm,n) \to (tm,n)$ and the following axioms:
\medskip
\begin{center}
\AxiomC{$\Gamma \vdash i : I$}
\AxiomC{$\Gamma \vdash j : I$}
\BinaryInfC{$\Gamma \vdash sq(i,j) : I$}
\DisplayProof
\qquad
\AxiomC{$\Gamma \vdash i : I$}
\UnaryInfC{$\Gamma \vdash sq(i,left) \deq left$}
\DisplayProof
\end{center}

\medskip
\begin{center}
\AxiomC{$\Gamma \vdash j : I$}
\UnaryInfC{$\Gamma \vdash sq(left,j) \deq left$}
\DisplayProof
\qquad
\AxiomC{$\Gamma \vdash j : I$}
\UnaryInfC{$\Gamma \vdash sq(right,j) \deq j$}
\DisplayProof
\end{center}

Also, these theories correspond to $Id_-$.
To get theories that correspond to $Id$, we need to add one additional rule to each of them:
\medskip
\begin{center}
\AxiomC{$\Gamma \vdash a : A$}
\UnaryInfC{$\Gamma \vdash coe_0(\lambda x.\,A, a) \deq a$}
\DisplayProof
\quad
\AxiomC{$\Gamma \vdash a : A$}
\UnaryInfC{$\Gamma \vdash coe_1(\lambda x.\,A, a, right) \deq a$}
\DisplayProof
\end{center}

\medskip
\begin{center}
\AxiomC{$\Gamma \vdash a : A$}
\UnaryInfC{$\Gamma \vdash coe_2(\lambda x.\,A, left, a, right) \deq a$}
\DisplayProof
\end{center}
\smallskip
We denote these theories by $coe_0 + \sigma$, $coe_1 + \sigma$ and $coe_2 + \sigma$.

We will also consider additional axioms $\beta_1$ and $\beta_2$ for $coe_2$.
Axiom $\beta_1$ is defined as follows:
\medskip
\begin{center}
\AxiomC{$\Gamma, x : I \vdash D\ type$}
\AxiomC{$\Gamma \vdash d : D[x \repl left]$}
\BinaryInfC{$\Gamma \vdash coe_2(\lambda x.\,D, right, d, right) \deq d$}
\DisplayProof
\end{center}
\medskip
Axiom $\beta_2$ is defined as follows:
\medskip
\begin{center}
\AxiomC{$\Gamma, x : I \vdash D\ type$}
\AxiomC{$\Gamma \vdash d : D[x \repl left]$}
\AxiomC{$\Gamma \vdash i : I$}
\TrinaryInfC{$\Gamma \vdash coe_2(\lambda x.\,D, i, d, i) \deq d$}
\DisplayProof
\end{center}
Obviously, we have maps from $coe_2$ to $coe_2 + \beta_1$ and from $coe_2 + \beta_1$ to $coe_2 + \beta_2$.

Theory $coe_2 + \beta_2$ is slightly stronger than other theories.
To make them equivalent to $coe_2 + \beta_2$, we need to assume additional operations.
For example, we can consider regular theory $dc$ which has one function symbol
$dc : (tm,n) \times (tm,n) \times (tm,n) \to (tm,n)$ and the following axiom:
\medskip
\begin{center}
\AxiomC{$\Gamma \vdash i : I$}
\AxiomC{$\Gamma \vdash j : I$}
\AxiomC{$\Gamma \vdash k : I$}
\TrinaryInfC{$\Gamma \vdash dc(i,j,k) : I$}
\DisplayProof
\qquad
\AxiomC{$\Gamma \vdash i : I$}
\AxiomC{$\Gamma \vdash j : I$}
\BinaryInfC{$\Gamma \vdash dc(i,j,left) \deq i$}
\DisplayProof
\end{center}

\medskip
\begin{center}
\AxiomC{$\Gamma \vdash i : I$}
\AxiomC{$\Gamma \vdash j : I$}
\BinaryInfC{$\Gamma \vdash dc(i,j,right) \deq j$}
\DisplayProof
\qquad
\AxiomC{$\Gamma \vdash i : I$}
\AxiomC{$\Gamma \vdash k : I$}
\BinaryInfC{$\Gamma \vdash dc(i,i,k) \deq i$}
\DisplayProof
\end{center}

\subsection{Homogeneous path types}

To define maps between theories with $Id$ types and theories with an interval type,
we need to add an additional construction to the latter, which we call \emph{homogeneous path types}.
Let $HPath$ be a regular theory with function symbols $\idtype\ : (tm,n) \times (tm,n) \to (ty,n)$,
$path : (ty,n) \times (tm,n+1) \to (tm,n)$, and $at : (tm,n) \times (tm,n) \times (tm,n) \times (tm,n) \to (tm,n)$, and the following axioms:
\begin{center}
\AxiomC{$\Gamma \vdash a : A$}
\AxiomC{$\Gamma \vdash a' : A$}
\BinaryInfC{$\Gamma \vdash a \idtype a'\ type$}
\DisplayProof
\end{center}

\smallskip
\begin{center}
\AxiomC{$\Gamma \vdash A\ type$}
\AxiomC{$\Gamma, x : I \vdash a : A$}
\BinaryInfC{$\Gamma \vdash path(A, \lambda x.\,a) : a[x \repl left] \idtype a[x \repl right]$}
\DisplayProof
\end{center}

\smallskip
\begin{center}
\AxiomC{$\Gamma \vdash a : A$}
\AxiomC{$\Gamma \vdash p : a \idtype a'$}
\AxiomC{$\Gamma \vdash i : I$}
\TrinaryInfC{$\Gamma \vdash at(a, a', p, i) : A$}
\DisplayProof
\end{center}

\smallskip
\begin{center}
\AxiomC{$\Gamma \vdash A\ type$}
\AxiomC{$\Gamma, x : I \vdash a : A$}
\AxiomC{$\Gamma \vdash i : I$}
\TrinaryInfC{$\Gamma \vdash at(a[x \repl left], a[x \repl right], path(A, \lambda x.\,a), i) \deq a[x \repl i]$}
\DisplayProof
\end{center}

\smallskip
\begin{center}
\AxiomC{$\Gamma \vdash p : a \idtype a'$}
\UnaryInfC{$\Gamma \vdash path(ty(a), \lambda x.\,at(a, a', p, x)) \deq p$}
\DisplayProof
\end{center}

\smallskip
\begin{center}
\AxiomC{$\Gamma \vdash p : a \idtype a'$}
\UnaryInfC{$\Gamma \vdash at(a, a', p, left) \deq a$}
\DisplayProof
\quad
\AxiomC{$\Gamma \vdash p : a \idtype a'$}
\UnaryInfC{$\Gamma \vdash at(a, a', p, right) \deq a'$}
\DisplayProof
\end{center}

We can summarize the relationship between different theories in the following (noncommutative) diagram of theories:
\[ \xymatrix{ & Id_- \ar[rr] \ar[d] & & Id \ar[d] \\
              coe'_0 + sq \ar[r] & coe'_1 \ar[r] & coe'_2 \ar[d]    \ar[r] & coe'_0 + \sigma + sq \ar@<1.5pt>[r] \ar[d] & coe'_1 + \sigma \ar[d] \ar@<1.5pt>[l] \\
              sq          \ar[r] & dc     \ar[r] & coe'_2 + \beta_2 \ar[r] & coe'_0 + \sigma + dc \ar@<1.5pt>[r]        & coe'_1 + \sigma + dc   \ar@<1.5pt>[l]
            } \]
where $coe'_\alpha = coe_\alpha + HPath$.
This diagram does not commute strictly, but it should commute up to some appropriately defined notion of homotopy between morphisms of theories.

Arrows $Id_- \to Id$ and $coe'_1 \to coe'_2$ are obvious.
Let us construct vertical maps.
Define $refl$ and $J$ as follows:
\begin{align*}
refl(a) & = path(ty(a), \lambda x.\,a) \\
J(A, \lambda x y z.\,D, \lambda x.\,d, a, a', p) & = coe_0(\lambda i.\,D', d[x \repl a])
\end{align*}
where $D'$ is defined as follows:
\[ D[x \repl a, y \repl at(a, a', p, sq(i,right)), z \repl path(ty(a), \lambda j.\,at(a, a', p, sq(i,j)))]. \]
Note that $J(A, \lambda x y z.\,D, \lambda x.\,d, a, a, refl(a))$ equals to $coe_0(\lambda i. D'', d[x \repl a])$ where $D'' = D[x \repl a, y \repl a, z \repl refl(a)]$.
Thus if we have $\sigma$ rule, then equation $\vdash J(A, \lambda x y z.\,D, \lambda x.\,d, a, a, refl(a)) \deq d[x \repl a]$ holds.
If we have $coe_1$, then we can define $Jeq(D,d,a)$ as $coe_0(\lambda j.\,coe_1(\lambda i. D'', d[a], j) \idtype d[a], refl(d[a]))$.

Map $sq \to coe'_1$ can be defined as follows:
\[ sq(i,j) = at(left, j, coe_1(\lambda x.\,left \idtype x, refl(left), j), i). \]

Now, let us define arrow $coe_2 \to coe'_0 + sq + \sigma$.
First, note that we can define a map $coe_1 \to coe_0 + sq + \sigma$ as follows: $coe_1(\lambda x.\,D, d, j) = coe_0(\lambda i.\,D[x \repl sq(i,j)], d)$.
Then let $Ic(i) = path(I, \lambda j.\,sq(j,i))$.
Since $Ic(i) : left \idtype i$, we can define a term $dc'(i,j)$ of type $i \idtype j$.
Now, let $coe_2(\lambda x.\,D, i, d, j)$ be equal to $coe_0(\lambda x.\,D[x \repl at(i, j, dc'(i,j), x)], d)$.
Note that $\Gamma \vdash dc'(left,left) \deq refl(left)$; hence $\Gamma \vdash coe_2(\lambda x.\,D, left, d, left) \deq d$.

Maps in the bottom row are easy to define:
\begin{align*}
sq(i,j) & = dc(left,j,i) \\
dc(i,j,k) & = at(i,j,coe_2(\lambda x.\,i \idtype x, i, refl(i), j),k) \\
coe_2(\lambda x.\,D, i, d, j) & = coe_0(\lambda x.\,D[x \repl dc(i,j,x)], d)
\end{align*}

\subsection{Heterogeneous path types}

\emph{Heterogeneous path types} are a useful generalization of homogeneous path types.
Theory $Path$ be a theory with function symbols $Path : (ty,n+1) \times (tm,n) \times (tm,n) \to (ty,n)$,
    $path : (tm,n+1) \to (tm,n)$, and $at : (ty,n+1) \times (tm,n) \times (tm,n) \times (tm,n) \times (tm,n) \to (tm,n)$, and the following axioms:
\begin{center}
\AxiomC{$\Gamma, x : I \vdash A\ type$}
\AxiomC{$\Gamma \vdash a : A[x \repl left]$}
\AxiomC{$\Gamma \vdash a' : A[x \repl right]$}
\TrinaryInfC{$\Gamma \vdash Path(\lambda x.\, A, a, a')\ type$}
\DisplayProof
\end{center}

\smallskip
\begin{center}
\AxiomC{$\Gamma, x : I \vdash a : A$}
\UnaryInfC{$\Gamma \vdash path(\lambda x.\,a) : Path(\lambda x.\,A, a[x \repl left], a[x \repl right])$}
\DisplayProof
\end{center}

\smallskip
\begin{center}
\AxiomC{$\Gamma \vdash p : Path(\lambda x.\,A, a, a')$}
\AxiomC{$\Gamma \vdash i : I$}
\BinaryInfC{$\Gamma \vdash at(\lambda x.\,A, a, a', p, i) : A[x \repl i]$}
\DisplayProof
\end{center}

\smallskip
\begin{center}
\AxiomC{$\Gamma, x : I \vdash a : A$}
\AxiomC{$\Gamma \vdash i : I$}
\BinaryInfC{$\Gamma \vdash at(\lambda x.\,A, a[x \repl left], a[x \repl right], path(\lambda x.\,a), i) \deq a[x \repl i]$}
\DisplayProof
\end{center}

\smallskip
\begin{center}
\AxiomC{$\Gamma \vdash p : Path(\lambda x.\,A, a, a')$}
\UnaryInfC{$\Gamma \vdash path(\lambda y.\,at(\lambda x.\,A, a, a', p, y)) \deq p$}
\DisplayProof
\end{center}

\smallskip
\begin{center}
\AxiomC{$\Gamma \vdash p : Path(\lambda x.\,A, a, a')$}
\UnaryInfC{$\Gamma \vdash at(\lambda x.\,A, a, a', p, left) \deq a$}
\DisplayProof
\quad
\AxiomC{$\Gamma \vdash p : Path(\lambda x.\,A, a, a')$}
\UnaryInfC{$\Gamma \vdash at(\lambda x.\,A, a, a', p, right) \deq a'$}
\DisplayProof
\end{center}

We will often omit the first three arguments of $at$ since it is easy to infer them from the type of the fourth argument.

There is an obvious morphism $f : HPath \to Path$ such that $f(a \idtype a') = Path(\lambda x.\,ty(a), a, a')$.

The theory we are describing has many similarities to the theory of cubical sets.
The reason is that we can think of contexts $I, \ldots I \vdash$ as $n$-dimensional cubes.
Let $M$ be a model of $I$ and let $A,B \in M_{(ty,0)}$, then the sequence of sets
$\{\,x \in M_{(tm,n+1)}\ |\ ty(x) = (A, I, \ldots I \vdash B\!\uparrow^{n+1})\,\}$ has a natural structure of a cubical set.
If $M$ is a model of $coe_1 + Path$, then these cubical sets are fibrant, that is have fillers for all cubical horns.
We will formally define operations $Fill^n$ which give us these fillers in subsection~\ref{sec:fillers}.
Now we need these fillers to define several operations that we will use in the next subsection.

First, let us define $sq_l$ which satisfies all of the axioms of $sq$ together with axiom $\Gamma \vdash sq(i,right) \deq i$.
This operation is analogous to connections in cubical sets.
Actually, this construction shows that cubical sets that we defined before from a model of the theory have connections.
We can define $sq_l$ by filling the following horn:
\[ \xymatrix @C=0.5pc @R=0.5pc
    { left \ar[rrrr] \ar[dddd] &          & &                      & left \ar[dddd] \\
           & left \ar[rr] \ar[dd] \ar[ul] & & left \ar[dd] \ar[ur] & \\
           &                              & &                      & \\
           & left \ar[rr] \ar[dl]         & & left \ar[dr]         & \\
      left \ar[rrrr]           &          & &                      & right
    }\]
The inner, left, and top squares are $\lambda i\,j.\,left$, the bottom and right squares are $sq$,
and the filler gives us the outer square which is the required operation $sq_l$.
Formally, we define $sq_l(i,j)$ as
\[ at(at(coe_0(\lambda x_1.\,Path(\lambda x_2.\,left \idtype sq(x_1,x_2), refl(left), p_1), p_2),i),j) \]
where $p_1 = path(\lambda x_3.\,sq(x_1,x_3))$, $p_2 = refl(refl(left))$.

Operation $sq_r$ is similar to $sq_l$; it satisfies the following axioms:
\begin{align*}
\Gamma & \vdash sq_r(left,j) \deq j \\
\Gamma & \vdash sq_r(right,j) \deq right \\
\Gamma & \vdash sq_r(i,left) \deq i \\
\Gamma & \vdash sq_r(i,right) \deq right
\end{align*}
We can define $sq_r$ by filling the following horn:
\[ \xymatrix @C=0.5pc @R=0.5pc
    { left \ar[rrrr] \ar[dddd] &          & &                      & right \ar[dddd] \\
           & left \ar[rr] \ar[dd] \ar[ul] & & left \ar[dd] \ar[ur] & \\
           &                              & &                      & \\
           & left \ar[rr] \ar[dl]         & & left \ar[dr]         & \\
      right \ar[rrrr]           &         & &                      & right
    }\]
The inner square is $\lambda x_1\,x_2.\,left$, the left square is $\lambda x_2\,x_3.\,sq_l(x_2,x_3)$,
the top square is $\lambda x_1\,x_3.\,sq_l(x_1,x_3)$, the right square is $\lambda x_2.\,x_3.\,x_3$,
and the bottom square is $\lambda x_1\,x_3.\,x_3$.
The outer square gives us the required operation $sq_r$.

We will also need operation $dc'$ which satisfies the following axioms:
\begin{align*}
\Gamma & \vdash dc'(i,j,left) \deq i \\
\Gamma & \vdash dc'(i,j,right) \deq j \\
\Gamma & \vdash dc'(left,left,k) \deq left \\
\Gamma & \vdash dc'(right,right,k) \deq right
\end{align*}
Thus we need to find a map from $I^3$ to $I$, and we can do this by filling some horn.
Conditions that we put on $dc'$ are not enough to define a cubical horn, but we can fill missing parts.
Consider the following picture:
\[ \xymatrix @C=0.5pc @R=0.5pc
    { right \ar[rrrr] \ar[dddd] &         & &                      & right \ar[dddd] \\
           & left \ar[rr] \ar[dd] \ar[ul] & & left \ar[dd] \ar[ur] & \\
           &                              & &                      & \\
           & left \ar[rr] \ar[dl]         & & right \ar[dr]        & \\
      left \ar[rrrr]           &          & &                      & right
    }\]
Here $j$ is going from left to right, $k$ is going from top to bottom, and $i$ is going diagonally.
Top square is $\lambda j\,i.\,i$, bottom square is $\lambda j\,i.\,j$, left side of the inner square is $\lambda k.\,left$,
and the right side of the outer square is $\lambda k.\,right$.
We can take the inner square to be $\lambda j\,k.\,sq_l(j,k)$ and the right square to be $\lambda j\,k.\,sq_r(j,k)$.
The left square we can define by the filler operation.

For every $\Gamma \vdash a : A$, $\Gamma \vdash a' : A$, we have a type of 1-dimensional cubes (that is paths) between $a$ and $a'$.
We could also consider the type of $n$-dimensional cubes with given boundary.
For $n = 2$ this can be described as follows.
Suppose that we have terms $\Gamma, x : I \vdash p_{-0} : A$, $\Gamma, x : I \vdash p_{-1} : A$, $\Gamma, y : I \vdash p_{0-} : A$ and $\Gamma, y : I \vdash p_{1-} : A$
such that $p_{-0}[x \repl left] = p_{0-}[y \repl left] = a_{00}$, $p_{-0}[x \repl right] = p_{1-}[y \repl left] = a_{10}$,
$p_{0-}[y \repl right] = p_{-1}[x \repl left] = a_{01}$ and $p_{1-}[y \repl right] = p_{-1}[x \repl right] = a_{11}$.
\[ \xymatrix{ a_{00} \ar[r]^{p_{-0}} \ar[d]_{p_{0-}} & a_{10} \ar[d]^{p_{1-}} \\
              a_{01} \ar[r]_{p_{-1}}                 & a_{11}
            } \]
Then we define type $Square(p_{-0},p_{-1},p_{0-},p_{1-})$ of 2-dimensional cubes as
\[ Path(\lambda x.\,p_{-0} \idtype p_{-1}, path(\lambda y.\,p_{0-}), path(\lambda y.\,p_{1-})). \]

We can analogously define types of $n$-dimensional cubes for all $n$.
It is difficult to describe such types without heterogeneous path types,
but we can do this at least for small $n$.
For example, for $n = 2$ we can define it as either $p_{-0} * p_{1-} \idtype p_{0-} * p_{-1}$ (where $*$ is a concatenation of paths) or $p_{0-} \idtype p_{-0} * p_{1-} * sym(p_{-1})$.
It is easy to see that these types are (homotopy) equivalent, that is we can define mutually inverse functions between them.
We can show that they are also equivalent to $Square(p_{-0},p_{-1},p_{0-},p_{1-})$:

\begin{lem}[squares-eq]
Types $Square(p_{-0},p_{-1},p_{0-},p_{1-})$ and $p_{0-} \idtype p_{-0} * p_{1-} * sym(p_{-1})$ are equivalent.
\end{lem}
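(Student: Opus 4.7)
The plan is to construct explicit mutually inverse maps $F \colon Square(p_{-0},p_{-1},p_{0-},p_{1-}) \to (p_{0-} \idtype p_{-0} * p_{1-} * sym(p_{-1}))$ and $G$ in the reverse direction, and to verify inverseness using three-dimensional cubical fillers. For brevity write $S$ and $T$ for the two types.

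First I would unpack both sides to two-variable cubical data. Iterating $path$ and $at$, an element $s$ of $S$ is equivalently a term $\Gamma, x : I, y : I \vdash s(x,y) : A$ whose four edge restrictions are exactly $p_{-0},p_{-1},p_{0-},p_{1-}$. Similarly, expanding $*$ and $sym$ as fillers of one-dimensional horns, an element of $T$ is equivalent data of a term $\Gamma, t : I, u : I \vdash q(t,u) : A$ whose $t \repl left$ slice is $p_{0-}(u)$ and whose $t \repl right$ slice is the three-edge composite walk evaluated at $u$. Under these unpackings, the equivalence is essentially a reparametrization of the unit square.

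Next, I would define $F$ and $G$ by such reparametrizations. Using the connections $sq_l, sq_r$ and the three-variable operation $dc'$ constructed earlier in this subsection, I can build terms $\alpha, \beta \colon I \times I \to I$ so that $s[x \repl \alpha(t,u),\, y \repl \beta(t,u)]$ restricts at $t \repl left$ to $p_{0-}(u)$ and at $t \repl right$ to the composite walk; two applications of $path$ then give $F(s)$. The inverse $G$ is obtained from the dual reparametrization sending the unit square bijectively the other way, turning a homotopy between the two paths $a_{00} \idtype a_{01}$ back into a two-variable term with the required square boundary.

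Finally, I would check $G \circ F \simeq \mathrm{id}_S$ and $F \circ G \simeq \mathrm{id}_T$ by producing, for each identity, a 3-cube two of whose opposite faces are the round trip and the identity and whose remaining four faces come from the reparametrizations applied to degenerate 2-cubes. By the filler operations $Fill^n$ introduced in the following subsection, each such horn admits a filler. This third step is the main obstacle: verifying that the five prescribed faces of the 3-cube agree along their common 1-edges is intricate cubical bookkeeping, though once compatibility is in hand, existence of the filler is automatic.
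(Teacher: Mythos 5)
Your strategy is genuinely different from the paper's, and it is worth noting what the paper does instead: it never constructs explicit mutually inverse maps. It builds a single line of types $\Gamma, i : I \vdash H$ with $Square(p_{-0},p_{-1},p_{0-},p_{1-})$ at one endpoint and $p_{0-} \idtype p_{-0} *_l p_{1-} *_r sym(p_{-1})$ at the other, using the connections $sq_l$ and $sq_r$ inside both the type family and the second endpoint term (and fixing concatenations $*_l$, $*_r$ with strict unit laws so that the endpoints come out judgmentally right). The equivalence is then obtained by coercing along $H$, so no round-trip verification and no three-dimensional horn filling is needed. This is the device that makes the later arguments (e.g.\ \rlem{UA}) tractable, and it buys exactly what your approach has to pay for by hand.

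As written, your proof has a genuine gap at step two. The term $p_{-0} * p_{1-} * sym(p_{-1})$ is produced by concatenation operations, which are themselves defined by coercions/fillers applied to $p_{-0}$, $p_{1-}$, $p_{-1}$ as abstract paths; it is not judgmentally of the form $s[x \repl \alpha(right,u), y \repl \beta(right,u)]$ for any reparametrization $\alpha,\beta$ built from $sq$, $sq_l$, $sq_r$, $dc'$. But the path and $Path$ formation rules demand strict agreement on endpoints, so for $F(s)$ to inhabit $p_{0-} \idtype p_{-0} * p_{1-} * sym(p_{-1})$ the $t \repl right$ face must equal that composite on the nose, and the $u \repl left$ and $u \repl right$ faces must be strictly degenerate. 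If instead you interpret ``the composite walk'' as the literal boundary walk around three sides of the square, the reparametrization exists but lands in the wrong type, and correcting it back to the filler-defined concatenation reintroduces the coherence problem you set out to solve. You would at minimum need to fix specific concatenations with strict unit laws, as the paper does. Finally, you explicitly leave the face-compatibility check for the $3$-cube horns in step three unverified; under your approach that is where the real content of the lemma sits, so it cannot be waved off as bookkeeping.
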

\begin{proof}
Let $*_l$ be a concatenation of paths such that $refl(x) *_l p \idtype p$ and let $*_r$ be a concatenation such that $p *_r refl(y) \idtype p$.
Since all concatenations are equivalent, we can replace $*$ with either of these operations.
We construct a type $\Gamma, i : I \vdash H$ such that $H[left] = Square(p_{-0},p_{-1},p_{0-},p_{1-})$ and $H[right] = (p_{0-} \idtype p_{-0} *_l p_{1-} *_r sym(p_{-1}))$.
Let $H$ be equal to
\[ Path(\lambda x.\,at(p_{-0},sq_l(x,i)) \idtype at(p_{-1},sq_l(x,i)), path(\lambda y.\,p_{0-}), t), \]
where $t$ equals to
\[ path(\lambda j.\,at(p_{-0},sq_r(i,j))) *_l path(\lambda y.\,p_{1-}) *_r sym(path(\lambda j.\,at(p_{-1},sq_r(i,j)))). \]
Then $H$ satisfies the required conditions.
It is easy to define an equivalence between $H[left]$ and $H[right]$.
\end{proof}

\subsection{Local versions of $coe$}

Usually, we can define two different versions of an eliminator for a type in type theory, one of which is stronger.
For example, stronger versions of $coe$ look like this:

\medskip
\begin{center}
\AxiomC{$\Gamma, x : I, \Delta \vdash D\ type$}
\AxiomC{$\Gamma, \Delta[x \repl left] \vdash d : D[x \repl left]$}
\BinaryInfC{$\Gamma, \Delta[x \repl right] \vdash coe^l_0(\lambda x.\,D, d) : D[x \repl right]$}
\DisplayProof
\end{center}

\medskip
\begin{center}
\AxiomC{$\Gamma, x : I, \Delta \vdash D\ type$}
\AxiomC{$\Gamma, \Delta[x \repl left] \vdash d : D[x \repl left]$}
\AxiomC{$\Gamma \vdash i : I$}
\TrinaryInfC{$\Gamma, \Delta[x \repl i] \vdash coe^l_1(\lambda x.\,D, d, i) : D[x \repl i]$}
\DisplayProof
\end{center}

\medskip
\begin{center}
\AxiomC{$\Gamma, x : I, \Delta \vdash D\ type$}
\AxiomC{$\Gamma, \Delta[x \repl left] \vdash d : D[x \repl left]$}
\BinaryInfC{$\Gamma, \Delta[x \repl left] \vdash coe^l_1(\lambda x.\,D, d, left) \deq d$}
\DisplayProof
\end{center}

\medskip
\begin{center}
\AxiomC{$\Gamma, x : I, \Delta \vdash D\ type$}
\AxiomC{$\Gamma \vdash i : I$}
\AxiomC{$\Gamma, \Delta[x \repl i] \vdash d : D[x \repl i]$}
\AxiomC{$\Gamma \vdash j : I$}
\QuaternaryInfC{$\Gamma, \Delta[x \repl j] \vdash coe^l_2(\lambda x.\,D, i, d, j) : D[x \repl j]$}
\DisplayProof
\end{center}

\medskip
\begin{center}
\AxiomC{$\Gamma, x : I, \Delta \vdash D\ type$}
\AxiomC{$\Gamma, \Delta[x \repl left] \vdash d : D[x \repl left]$}
\BinaryInfC{$\Gamma, \Delta[x \repl left] \vdash coe^l_2(\lambda x.\,D, left, d, left) \deq d$}
\DisplayProof
\end{center}

We can also consider theory $coe^l_2 + \beta^l_2$ which is $coe^l_2$ together with the following axiom:
\begin{center}
\AxiomC{$\Gamma, x : I, \Delta \vdash D\ type$}
\AxiomC{$\Gamma \vdash i : I$}
\AxiomC{$\Gamma, \Delta[x \repl i] \vdash d : D[x \repl i]$}
\TrinaryInfC{$\Gamma, \Delta[x \repl i] \vdash coe^l_2(\lambda x.\,D, i, d, i) \deq d$}
\DisplayProof
\end{center}

We call such versions of these operations \emph{local}, and the ones that were defined before \emph{global}.
The relationship between local versions of these operations is the same as between global ones.
Usually, if we have $\Pi$ type, then we can define local versions in terms of global, but without them local are strictly stronger.
But this is not the case for $coe_2 + \beta_2$; it turns out that $coe^l_2 + \beta^l_2$ follow from $coe_2 + \beta_2$ even without $\Pi$ types.

It is not convenient to work with such local operations directly since context in the conclusion is extended, but we can always rewrite them in the usual form.
For example, if $\Delta$ equals to $y_1 : B_1, \ldots y_k : B_k$, then we can rewrite $coe^l_2$ as follows:
\medskip
\begin{center}
\def\extraVskip{1pt}
\Axiom$\fCenter \Gamma, x : I, \Delta \vdash D\ type$
\noLine
\UnaryInf$\fCenter \Gamma \vdash i : I$
\noLine
\UnaryInf$\fCenter \Gamma, \Delta[x \repl i] \vdash d : D[x \repl i]$
\noLine
\UnaryInf$\fCenter \Gamma \vdash j : I$
\def\extraVskip{2pt}
\Axiom$\fCenter \Gamma \vdash b_1 : B_1[x \repl j]$
\noLine
\UnaryInf$\fCenter \ldots$
\noLine
\UnaryInf$\fCenter \Gamma \vdash b_k : B_k[x \repl j, y_1 \repl b_1, \ldots y_{k-1} \repl b_{k-1}]$
\BinaryInfC{$\Gamma \vdash coe^{l'}_2(\lambda x\,y_1 \ldots y_k.\,D, i, d, j, b_1, \ldots b_k) : D[x \repl j, y_1 \repl b_1, \ldots y_k \repl b_k]$}
\DisplayProof
\end{center}
Then theories $coe^l_2$ and $coe^{l'}_2$ are isomorphic.
Maps between them are defined as follows:
\begin{align*}
coe^l_2(\lambda x.\,D, i, d, j) & = coe^{l'}_2(\lambda x\,y_1 \ldots y_k.\,D, i, d, j, y_1, \ldots y_k) \\
coe^{l'}_2(\lambda x\,y_1 \ldots y_k.\,D, i, d, j, b_1, \ldots b_k) & = coe^l_2(\lambda x.\,D, i, d, j)[b_1, \ldots b_k]
\end{align*}

Now, we can define a map $coe^{l'}_2 + \beta^l_2 \to coe_2 + \beta_2$.
First, let $b'_m(z)$ be equal to
\[ coe_2(\lambda x.\,B_m[y_1 \repl b_1'(x), \ldots y_{m-1} \repl b_{m-1}'(x)], j, b_m, z) \]
for every $1 \leq m \leq k$.
If $\Gamma \vdash z : I$, then $\Gamma \vdash b_m'(z) : B_m[x \repl z, y_1 \repl b_1'(z), \ldots b_{m-1}'(z)]$.
Now, we can define $coe^{l'}_2(\lambda x\,y_1 \ldots y_k.\,D, i, d, j, b_1, \ldots b_m)$ as follows:
\[ coe_2(\lambda x.\,D[y_1 \repl b_1'(x), \ldots y_k \repl b_k'(x)], i, d[y_1 \repl b'_1(i), \ldots y_k \repl b'_k(i)], j) \]

Theories $coe^{l'}_0$ and $coe^{l'}_1$ are defined similarly to $coe^{l'}_2$.
Theory $coe^{l'}_1 + \sigma^l$ is $coe^{l'}_1$ together with the following axiom:
\medskip
\begin{center}
\def\extraVskip{1pt}
\Axiom$\fCenter \Gamma, \Delta \vdash D\ type$
\noLine
\UnaryInf$\fCenter \Gamma, \Delta \vdash d : D$
\noLine
\UnaryInf$\fCenter \Gamma \vdash i : I$
\def\extraVskip{2pt}
\Axiom$\fCenter \Gamma \vdash b_1 : B_1$
\noLine
\UnaryInf$\fCenter \ldots$
\noLine
\UnaryInf$\fCenter \Gamma \vdash b_k : B_k[y_1 \repl b_1, \ldots y_{k-1} \repl b_{k-1}]$
\BinaryInfC{$\Gamma \vdash coe^{l'}_1(\lambda x\,y_1 \ldots y_k.\,D, d, i, b_1, \ldots b_k) \deq d[b_1, \ldots b_k]$}
\DisplayProof
\end{center}

If we have heterogeneous path types, then we can define $coe^{l'}_0 + \sigma^l + sq$ in terms of $coe_0 + \sigma + sq$ and $coe^{l'}_1 + \sigma^l$ in terms of $coe_1 + \sigma$.
We can define map $coe^{l'}_1 \to coe^{l'}_0$ as before:
\[ coe^{l'}_1(\lambda x\,y_1 \ldots y_k.\,D, d, i, b_1, \ldots b_k) = coe^{l'}_0(\lambda x\,y_1 \ldots y_k.\,D[x \repl sq(x,i)], d, b_1, \ldots b_k) \]
Since we already know that there are maps going in both directions between $coe_0 + \sigma + sq$ and $coe_1 + \sigma$,
we just need to construct a map $coe^{l'}_0 + \sigma^l + sq + Path \to coe_0 + \sigma + sq + Path$.

To do this, first we define a map $coe_2 + \beta_1 \to coe_0 + \sigma + sq + Path$:
\[ coe_2(\lambda x.\,D, i, d, j)  = coe_0(\lambda x.\,D[x \repl dc'(i,j,x)], d) \]
Now, we can define $coe^{l'}_0(\lambda x\,y_1 \ldots y_k.\,D, d, b_1, \ldots b_m)$ as follows:
\[ coe_0(\lambda x.\,D[y_1 \repl b_1'(x), \ldots y_k \repl b_k'(x)], d[y_1 \repl b'_1(left), \ldots y_k \repl b'_k(left)]) \]
where $b'_m(z)$ equals to
\[ coe_2(\lambda x.\,B_m[y_1 \repl b_1'(x), \ldots y_{m-1} \repl b_{m-1}'(x)], right, b_m, z) \]

\subsection{Fillers}
\label{sec:fillers}

We already saw examples of two and three-dimensional filler operations.
Here we define theories $Fill^l_{tm}$ and $Fill_{tm}$ of local and global filler operations.
We will also construct morphisms between $Fill_{tm} + Path$ and $coe_1 + Path$;
It might be possible to construct a map from $Fill^l_{tm}$ to $coe_2 + \beta_2 + Path$, but it is more complicated and we will not need this construction.
In this subsection we switch back to the presentation of terms using De Bruijn indices since it will be notationally more convenient.

First, let us introduce a bit of notation.
Recall that if $(a_1, \ldots a_k)$ is a morphism of contexts $\Gamma$ and $\Delta$ and $b : (p,k+m)$ is such that $\vdash ctx^m(b) \deq \Delta$,
then we have $s = subst^m(\Gamma, b, a_1, \ldots a_k) : (p,n+m)$ such that $\vdash ctx^m(s) \deq \Gamma$.
We will also denote $s$ by $(a_1, \ldots a_k)^*(b)$.
In particular, if $h : (p,n+1)$ is such that $\vdash ctx^n(h) \deq I$, then for every $c \in \{ left, right \}$, we have $c^*(h) = subst^n(\emptyCtx, h, c) : (ctx,n)$.
Also, if $a : (p,n)$, then let $I \times a = subst^n(I, a) : (p,n+1)$.

Now, we need to describe certain morphisms of contexts which corresponds to cubical faces.
For every $n,k \in \mathbb{N}$, $0 \leq i \leq k$, and $c \in \{ left, right \}$, let $[i = c]$ denote sequence $v_{n+k-1}, \ldots v_i, c, v_{i-1}, \ldots v_0$.
The idea is that if $\Gamma : (ctx,n)$, $a : (p,n+k+1+m)$, and $ctx^m(a) = (\Gamma, I^{k+1} \vdash)$,
then $[i = c]^*(a)$ corresponds to the left or right (depending on $c$) $i$-th face of $a$.
We will also need an operation that gives us degenerate cubes.
For every $\Gamma : (ctx,n)$, $a : (p,n+k+m)$ and $0 \leq i \leq k$,
let $\delta_i(a) = subst^m((\Gamma, I^{k+1} \vdash), a, v_{n+k}, \ldots v_{i+1}, v_{i-1}, \ldots v_0)$.

Now we can define regular theory $Fill^l_{tm}$.
It is the regularization of a theory that has function symbol $Fill^n_{(tm,n+k)} : (ty,n+k) \times (tm,n-1+k)^{2n-1} \to (tm,n+k)$
for every $n,k \in \mathbb{N}$, $n > 0$, and the following axioms:
\medskip
\begin{center}
\def\extraVskip{1pt}
\Axiom$\fCenter I^n, \Delta \vdash D\ type$
\noLine
\UnaryInf$\fCenter I^{n-1}, [i = c]^*(\Delta) \vdash d_{[i = c]} : [i = c]^*(D)$
\noLine
\UnaryInf$\fCenter [i_1 = c]^*(d_{[i_2 = c']}) = [i_2-1 = c']^*(d_{[i_1 = c]}),\ 0 \leq i_1 < i_2 \leq n-1$
\def\extraVskip{2pt}
\UnaryInf$\fCenter I^n, \Delta \vdash Fill^n_{(tm,n+k)}(D, d_{[0 = left]}, d_{[0 = right]}, \ldots d_{[n-1 = left]}) : D$
\DisplayProof
\end{center}
where $Fill^n_{(tm,n+k)}$ has arguments of the form $d_{[i = c]}$ for every $0 \leq i < n$ and $c \in \{ left, right \}$ except for $d_{[n-1 = right]}$.

\medskip
\begin{center}
\def\extraVskip{1pt}
\Axiom$\fCenter I^n, \Delta \vdash D\ type$
\noLine
\UnaryInf$\fCenter I^{n-1}, [i = c]^*(\Delta) \vdash d_{[i = c]} : [i = c]^*(D)$
\noLine
\UnaryInf$\fCenter [i_1 = c]^*(d_{[i_2 = c']}) = [i_2-1 = c']^*(d_{[i_1 = c]}),\ 0 \leq i_1 < i_2 \leq n-1$
\def\extraVskip{2pt}
\UnaryInf$\fCenter [i = c]^*(Fill^n_{(tm,n+k)}(D, d_{[0 = left]}, d_{[0 = right]}, \ldots d_{[n-1 = left]})) = d_{[i = c]}$
\DisplayProof
\end{center}
\medskip

Theory $Fill_{tm}$ is the subtheory of $Fill^l_{tm}$ which has only function symbols of the form $Fill^n_{(tm,n+0)}$.
We will denote such function symbols by $Fill_{(tm,n)}$.

It is easy to define a map from $coe^l_1$ to $Fill^l_{tm}$:
\[ coe^l_1(D, d, i) = i^*(Fill^1(D, d)) \]
We also can define $Fill^1_{(tm,1+k)}(D, d_{[0 = left]})$ in terms of $coe^l_1$ as $coe^l_1(I \times D, I \times d_{[0 = left]}, v_k)$.
Actually, theory $coe^l_1$ and subtheory of $Fill^l_{tm}$ which consists of $Fill^1_{(tm,1+k)}$ are isomorphic.

We can define analogous maps between $Fill_{(tm,1)}$ and $coe_1$, but we also can define a morphism $Fill_{tm} \to coe_1 + Path$.
We define terms $Fill_{(tm,n)}$ by induction on $n$.
We already defined such term for $n = 1$, and we can define $Fill_{(tm,n+1)}$ as
\[ at(Fill_{(tm,n)}(Path(D, d_{[0 = left]}, d_{[0 = right]}), path(d_{[1 = left]}), \ldots path(d_{[n = left]}))\!\uparrow, v_0). \]

\subsection{Univalence}
\label{sec:univalence}

We will consider regular theory $wUA$ under $coe_0$, which has additional symbol
\[ iso : (ty,n)^2 \times (tm,n+1)^4 \times (tm,n) \to (ty,n) \]

Axioms of this theory have a lot of premises, so we list them now.
We will denote by $S$ the following set of formulae:
\begin{align*}
\Gamma & \vdash A\ type \\
\Gamma & \vdash B\ type \\
\Gamma, x : A & \vdash f : B \\
\Gamma, y : B & \vdash g : A \\
\Gamma, x : A, i : I & \vdash p : A \\
\Gamma, x : A & \vdash p[i \repl left] \deq g[y \repl f] \\
\Gamma, x : A & \vdash p[i \repl right] \deq x \\
\Gamma, y : B, i : I & \vdash q : B \\
\Gamma, y : B & \vdash q[i \repl left] \deq f[x \repl g] \\
\Gamma, y : B & \vdash q[i \repl right] \deq y
\end{align*}
If we have homogeneous path types, then the last six axioms can be replaced with the following two:
\begin{align*}
\Gamma, x : A & \vdash p : g[y \repl f] \idtype x \\
\Gamma, y : B & \vdash q : f[x \repl g] \idtype y
\end{align*}

Now, we can define axioms of $wUA$:
\medskip
\begin{center}
\AxiomC{$S$}
\AxiomC{$\Gamma \vdash j : I$}
\BinaryInfC{$\Gamma \vdash iso(A, B, \lambda x.\,f, \lambda y.\,g, \lambda x i.\,p, \lambda y i.\,q, j)\ type$}
\DisplayProof
\end{center}

\medskip
\begin{center}
\AxiomC{$S$}
\UnaryInfC{$\Gamma \vdash iso(A, B, \lambda x.\,f, \lambda y.\,g, \lambda x i.\,p, \lambda y i.\,q, left) \deq A$}
\DisplayProof
\end{center}

\medskip
\begin{center}
\AxiomC{$S$}
\UnaryInfC{$\Gamma \vdash iso(A, B, \lambda x.\,f, \lambda y.\,g, \lambda x i.\,p, \lambda y i.\,q, right) \deq B$}
\DisplayProof
\end{center}

\medskip
\begin{center}
\AxiomC{$S$}
\UnaryInfC{$\Gamma \vdash coe_0(\lambda j.\,iso(A, B, \lambda x.\,f, \lambda y.\,g, \lambda x i.\,p, \lambda y i.\,q, j), a) \deq f[x \repl a]$}
\DisplayProof
\end{center}
\medskip

This theory is similar to the univalence axiom, but it is defined for all types.
Actually, it seems that it is weaker than ordinary univalence, therefore we call this theory weak univalence.
We can add some additional rules to get the full univalence axiom, but this version will suffice for our purposes.
The (weak) univalence axiom for a universe follows from the assumption that this universe is closed under $iso$.

Although rules of $wUA$ do not imply that equivalences and paths between types are equivalent, we still can show that they are related.
First, we need to define a theory of equivalences.
Several equivalent definitions of equivalences are given in \cite{hottbook},
but some of them require additional constructions such as $\Sigma$ or $\Pi$ types.
Thus we will use a definition which requires only path types (actually, we can formulate it in such a way that $I$ will suffice).
Theory $Eq$ have the following axioms:
\begin{align*}
\Gamma & \vdash A\ type \\
\Gamma & \vdash B\ type \\
\Gamma, x : A & \vdash b : B \\
\Gamma, y : B & \vdash a_1 : A \\
\Gamma, x : A & \vdash p : a_1[y \repl b] \idtype x \\
\Gamma, y : B & \vdash a_2 : A \\
\Gamma, y : B & \vdash q : b[x \repl a_2] \idtype y
\end{align*}

Theory $UEq$ have one axiom $\Gamma, i : I \vdash H\ type$.
There is a canonical morphism $\varphi : Eq + coe_2 + \beta_1 \to UEq + coe_2 + \beta_2$.
To define it, let us first introduce auxiliary terms: $f(k) = coe_2(\lambda i.\,H, left, x, k)$ and $g(k) = coe_2(\lambda i.\,H, right, y, k)$.
If $\Gamma \vdash k : I$, then $\Gamma, x : H[i \repl left] \vdash f(k) : H[i \repl k]$ and $\Gamma, y : H[i \repl right] \vdash g(k) : H[i \repl k]$.
Now, we can define $\varphi$ as follows:
\begin{align*}
\varphi(A) & = H[i \repl left] \\
\varphi(B) & = H[i \repl right] \\
\varphi(b) & = f(right) \\
\varphi(a_1) & = g(left) \\
\varphi(a_2) & = g(left) \\
\varphi(p) & = coe_2(\lambda j.\,coe_2(\lambda i.\,H, j, f(j), left) \idtype x, left, refl(x), right) \\
\varphi(q) & = coe_2(\lambda j.\,coe_2(\lambda i.\,H, j, g(j), right) \idtype y, right, refl(x), left)
\end{align*}

A theory $UA$ of univalence should satisfy condition that $\varphi + id_{UA} : Eq + coe_2 + \beta_1 + UA \to UEq + coe_2 + \beta_1 + UA$ is an equivalence of theories (in some sense).
In the case of $wUA$, we still can construct a map $\psi : UEq + coe_2 + \beta_1 + wUA \to Eq + coe_2 + \beta_1 + wUA$ such that $\psi \circ \varphi'$ is homotopic to $id_{Eq + coe_2 + \beta_1 + wUA}$, where $\varphi' = \varphi + id_{wUA}$.
This means that for every symbol $x$ of $Eq$ such that $\Delta \vdash x$ we can define a term $h(x)$ in $Eq$ such that
$\Delta, i : I \vdash h(x)$, axioms of $Eq$ hold, $\Delta \vdash h(x)[left] \deq \psi(\varphi(x))$ and $\Delta \vdash h(x)[right] \deq x$.

\begin{lem}[UA]
There exists a map $\psi : UEq + coe_2 + \beta_1 + wUA \to Eq + coe_2 + \beta_1 + wUA$ such that $\psi \circ \varphi'$ is homotopic to $id$.
\end{lem}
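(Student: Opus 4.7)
The plan is to take $\psi$ to be the identity on $coe_2 + \beta_1 + wUA$ and to interpret the unique type symbol $H$ of $UEq$ (living in context $\Gamma, i : I$) as an $iso$-family built from the equivalence data of $Eq$. Concretely, I first convert the two one-sided inverses $a_1, a_2$ of $Eq$ into a single quasi-inverse $g \repl a_2$ carrying two-sided homotopy data: the right homotopy $q : b[x \repl g] \idtype y$ is already given, and a left homotopy $p' : g[y \repl b] \idtype x$ is produced by the usual bi-invertibility argument, concatenating $p$ with the $a_1 \idtype a_2$ path coming from $a_1(y) \idtype a_1(b(a_2(y))) \idtype a_2(y)$. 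Then I set
\[ \psi(H) = iso(A, B, \lambda x.\,b, \lambda y.\,a_2, \lambda x\,i.\,p', \lambda y\,i.\,q, i); \]
all side conditions of $wUA$ are met, and the boundary axioms of $iso$ give $\psi(H)[i \repl left] \deq A$ and $\psi(H)[i \repl right] \deq B$ strictly.

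To verify $\psi \circ \varphi' \sim id$ I would exhibit, for each symbol $x$ of $Eq$, a term $h(x)$ with $\Delta, i : I \vdash h(x)$, $h(x)[left] \deq \psi(\varphi(x))$, $h(x)[right] \deq x$, and with the whole family satisfying the axioms of $Eq$. The type-level cases $h(A), h(B)$ are constant at $A, B$ by the $iso$-boundary equalities above. The forward map case $h(b)$ is also constant: we have $\psi(\varphi(b)) = coe_2(\lambda i.\,iso(\ldots,i), left, x, right)$, and identifying $coe_0$ with $coe_2(-, left, -, right)$ (the natural interpretation since $wUA$ is stated over $coe_0$), the $coe_0$-axiom of $wUA$ reduces this strictly to $b$.

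The substantive cases are $h(a_1), h(a_2), h(p), h(q)$. Since $\varphi(a_1) = \varphi(a_2) = coe_2(\lambda i.\,H, right, y, left)$, both witnesses start from the common term $\psi(\varphi(a_1)) = coe_2(\lambda i.\,iso(\ldots,i), right, y, left)$, and I would first show this is homotopic to $a_2$ via a $2$-dimensional cube built from the $wUA$ axiom, the path $q$, the $\beta_1$ equation, and the filler operations of subsection~\ref{sec:fillers} — this is the backward-transport-is-the-quasi-inverse argument. Composing further with the bi-invertibility homotopy $a_2 \idtype a_1$ yields $h(a_1)$, while $h(a_2)$ is the cube itself. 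The path-valued cases $h(p), h(q)$ then amount to filling squares (in the sense of the discussion preceding \rlem{squares-eq}) whose four boundary paths are already determined by the previously constructed $h$-terms; by that lemma they are equivalent to ordinary path composites, and the filler $Fill_{(tm,2)}$ produces them once matching on the corners is verified.

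The main obstacle is coherence: ensuring that the horns whose fillers define $h(p)$ and $h(q)$ really have well-defined boundaries, which requires unfolding the $wUA$ equalities in the presence of $coe_2$ and repeatedly using $\beta_1$ to make face substitutions match. I expect the cleanest route is to construct all the auxiliary $2$-cubes first (those witnessing $h(a_i)$ and those relating $\psi(\varphi(p)), \psi(\varphi(q))$ to $p, q$ respectively), and only then assemble them into the square fillers for $h(p), h(q)$, using the connection operations $sq_l, sq_r$ built earlier to turn rectangular $2$-cubes into the appropriate degeneracies where needed.
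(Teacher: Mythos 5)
Your outline coincides with the paper's: $\psi$ is the identity on $coe_2+\beta_1+wUA$ and sends $H$ to an $iso$-type built from the $Eq$ data after upgrading the one-sided inverses to a single quasi-inverse (the paper keeps $g\repl a_1$ and corrects $q$ to $q_1$ along the derived path $pa : a_1 \idtype a_2$; you keep $a_2$ and correct $p$ instead --- a symmetric, equally valid choice), and $h(A)$, $h(B)$, $h(b)$ are constant for exactly the reasons you give. The plan for $h(a_1)$, $h(a_2)$ (backward transport is homotopic to the quasi-inverse, then compose with $a_1 \idtype a_2$) is also consistent with what the paper does.

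The gap is in $h(p)$ and $h(q)$. These are squares with \emph{all four} faces prescribed ($\psi(\varphi'(p))$, $p$, $h(a_1)[y\repl b]$, $refl(x)$), and a fully prescribed square boundary is not a horn: $Fill_{(tm,2)}$ produces a missing face from the remaining ones, it cannot fill a sphere. By \rlem{squares-eq} such a square exists precisely when the composite of the boundary paths is homotopically trivial, i.e.\ when $h(a_1)[y\repl b] * p$ is homotopic to $\psi(\varphi'(p))$; proving this identity is the entire content of the construction, and ``matching on the corners'' is nowhere near sufficient. It also means $h(a_1)$ cannot be chosen first and the square filled afterwards --- the path must be chosen so that the identity holds. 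The paper's device here, which your plan lacks, is a substitution/whiskering argument: the square lives in context $x : A$, and since $p : a_1[y\repl b] \idtype x$ connects the generic $x$ to the substituted point, one builds the square in the context obtained by $x \repl a_1$ (where both rows of the boundary become computable from $q_1$, naturality, and the $wUA$ equation) and transports it back along the naturality squares of $p$, invoking \rlem{squares-eq} twice to identify the whiskered boundary with the original one. Without this step, or an explicit substitute for it, the proof of $\psi \circ \varphi' \sim id$ is incomplete.
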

\begin{proof}
Let $\psi(H) = iso(A, B, \lambda x.\,b, \lambda y.\,a_1, \lambda x.\,p, \lambda y.\,q_1, i)$, where
$q_1$ is the concatenation of paths $path(\lambda j.\,b[x \repl at(a_1, a_2, pa,j)])$ and $q$,
$pa$ is a path between $a_1$ and $a_2$, which can be obtained from $p$ and $q$ as the following concatenation:
\[ a_1 = a_1[y \repl y] \idtype a_1[y \repl b[x \repl a_2]] = a_1[y \repl b][x \repl a_2] \idtype x[x \repl a_2] = a_2. \]

We can define $h(A)$ as $A$, $h(B)$ as $B$ and $h(b)$ as $b$.
Thus we only need to construct $h(a_1)$, $h(a_2)$, $h(p)$ and $h(q)$.
Terms $h(a_1)$ and $h(a_2)$ should be the following paths:
\begin{align*}
& h(a_1) : coe_2(\lambda i.\,\psi(H), right, y, left) \idtype a_1 \\
& h(a_2) : coe_2(\lambda i.\,\psi(H), right, y, left) \idtype a_2
\end{align*}
Terms $h(p)$ and $h(q)$ should be squares with the following boundaries:
\[ \xymatrix{ coe_2(\lambda i.\,\psi(H), right, b, left) \ar[rr]^-{\psi(\varphi'(p))} \ar[d]_{h(a_1)[y \repl b]} & & x \ar[d]^{refl(x)} \\
              a_1[y \repl b] \ar[rr]_-p                                                                          & & x
            } \]
\[ \xymatrix{ b[x \repl coe_2(\lambda i.\,\psi(H), right, y, left)] \ar[rr]^-{\psi(\varphi'(q))} \ar[d]_{b[x \repl h(a_2)]} & & y \ar[d]^{refl(y)} \\
              b[x \repl a_2] \ar[rr]_-q                                                                                     & & y
            } \]
We construct $h(a_1)$ and $h(p)$, the other two terms are constructed analogously.
First, note that to construct a square with boundary given on the left, it is enough to construct a square with boundary given on the right:
\[ \xymatrix{ a_{00} \ar[r]^{p_{-0}} \ar[d]_{p_{0-}} & a_{10} \ar[d]^{p_{1-}} \\
              a_{01} \ar[r]_{p_{-1}}                 & a_{11}
            }
\qquad
   \xymatrix{ a_{00}[x \repl a_1] \ar[rr]^{p_{-0}[x \repl a_1]} \ar[d]_{p_{0-}[x \repl a_1]} & & a_{10}[x \repl a_1] \ar[d]^{p_{1-}[x \repl a_1]} \\
              a_{01}[x \repl a_1] \ar[rr]_{p_{-1}[x \repl a_1]}                              & & a_{11}[x \repl a_1]
            } \]
Indeed, if $T$ is a filler for the square on the right, then we can construct the following diagram:
\[ \xymatrix{ a_{00} \ar[rr]^-{a_{00}[x \repl sym(p)]} \ar[d]_{p_{0-}} & & a_{00}[x \repl a_1'] \ar[rr]^{p_{-0}[x \repl a_1']} \ar[d]_{p_{0-}[x \repl a_1']} & & a_{10}[x \repl a_1'] \ar[d]_{p_{1-}[x \repl a_1']} \ar[rr]^-{a_{10}[x \repl p]} & & a_{10} \ar[d]_{p_{1-}} \\
              a_{01} \ar[rr]_-{a_{01}[x \repl sym(p)]}                 & & a_{01}[x \repl a_1'] \ar[rr]_{p_{-1}[x \repl a_1']}                               & & a_{11}[x \repl a_1']                               \ar[rr]_-{a_{10}[x \repl p]} & & a_{11}
            } \]
where $a_1' = a_1[y \repl b]$, the middle square is $T[y \repl b]$ and side squares are naturality squares.
The right one, for example, is defined as $p_{1-}[x \repl p]$.
\Rlem{squares-eq} implies that these squares commute (up to homotopy).
By naturality and \rlem{squares-eq}, top and bottom rows are homotopic to $p_{-0}$ and $p_{-1}$ respectively.
Thus $p_{-0} * p_{1-} \idtype p_{0-} * p_{-1}$, and \rlem{squares-eq} implies that there exists a filler for the corresponding square.

Thus to construct $h(p)$, it is enough to define $h(a_1)$ in such a way that the outer square in the following diagram commutes:
\[ \xymatrix{ coe_2(\lambda i.\,\psi(H), right, y, left) \ar[r]^{q_1'} \ar[d]_{h(a_1)} & coe_2(\lambda i.\,\psi(H), right, b', left) \ar[rr]^-{\psi(\varphi'(p))[x \repl a_1]} \ar[d]_{h(a_1)[y \repl b[x \repl a_1]]} & & a_1 \ar[d]_{refl(a_1)} \\
              a_1 \ar[r]_{a_1[y \repl sym(q_1)]} & a_1[y \repl b'] \ar[rr]_-{p[x \repl a_1]}                                                                                                                           & & a_1
            } \]
where $b' = b[x \repl a_1]$ and $q_1' = coe_2(\lambda i.\,\psi(H), right, sym(q_1), left)$.
The left square commutes by naturality.
Since $q_1'$ has an inverse, the right square also commutes.
\end{proof}

\section{Model structure on models of theories with an interval type}
\label{sec:model-structure}

In this section for every regular theory $T$ (see \cite{alg-tt} for a definition of regular theories)
under $coe_1 + \sigma + Path + wUA$, we define a model structure on the category of models of $T$.
Every object of this model structure is fibrant and weak equivalences have several equivalent descriptions.

\subsection{Construction of models}

First, we need to describe several constructions of models of a theory.

For every model $M$ of a theory $T$, we define a theory $Lang(M)$.
It has function and predicate symbols of $T$ together with function symbol $O_a : s$ for every $a \in A_s$.
Axioms of $Lang(M)$ are axioms of $T$ together with the following sequents:
\begin{align*}
& \sststile{}{} O_a \downarrow \\
& \sststile{}{} \sigma(O_{a_1}, \ldots O_{a_k}) = O_{M(\sigma)(a_1, \ldots a_k)} \\
& \sststile{}{} R(O_{a_1}, \ldots O_{a_k})
\end{align*}
for every $a \in A_s$, every $a_i \in A_{s_i}$,
every $\sigma \in \mathcal{F}$ such that $M(\sigma)(a_1, \ldots a_k)$ is defined,
and every $R \in \mathcal{P}$ such that $(a_1, \ldots a_k) \in M(R)$.

Models of $Lang(M)$ are just models of $T$ together with a morphism from $M$.
That is, categories $M/\Mod{T}$ and $\Mod{Lang(M)}$ are isomorphic.
In particular, $A$ has a natural structure of a model of $Lang(M)$ defined as follows:
\begin{align*}
\alpha'(f)(O_a) & = a \\
\alpha'(f)(\sigma(x_1, \ldots x_k)) & = \alpha(f)(\sigma(x_1, \ldots x_k)) \\
\beta'(f)(R(x_1, \ldots x_k)) & = \beta(f)(R(x_1, \ldots x_k))
\end{align*}

\begin{lem}[cl-term]
If $t \in Term_\mathcal{F}(\varnothing)_s$ is such that $\sststile{}{} t \downarrow$ is a theorem of $Lang(M)$,
    then there is a unique $a \in A_s$ such that $\sststile{}{} t = O_a$ is a theorem of $Lang(M)$.
\end{lem}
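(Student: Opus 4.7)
The plan is to handle uniqueness first and then existence by induction on the structure of $t$.

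For uniqueness, suppose $\sststile{}{} t = O_a$ and $\sststile{}{} t = O_b$ are both theorems of $Lang(M)$. The carrier $A$ of $M$ already has a natural $Lang(M)$-structure given by the $\alpha'$, $\beta'$ described just before the statement, and under this interpretation $O_c$ evaluates to $c$ for every $c \in A_s$ by definition. Soundness of interpretation in any model then forces $a = b$ in $A_s$.

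For existence, I would induct on $t \in Term_\mathcal{F}(\varnothing)_s$. Since $t$ is a closed term of the signature of $Lang(M)$, it is either an atom of the form $O_a$ (in which case there is nothing to prove, with witness $a$ itself) or a compound term $\sigma(t_1, \ldots, t_k)$ with $\sigma \in \mathcal{F}$ and closed subterms $t_i \in Term_\mathcal{F}(\varnothing)_{s_i}$. Regularity of $T$, and hence of $Lang(M)$, guarantees that $\sststile{}{} t \downarrow$ entails $\sststile{}{} t_i \downarrow$ for each $i$, so the inductive hypothesis supplies elements $a_i \in A_{s_i}$ together with derivations of $\sststile{}{} t_i = O_{a_i}$. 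Combining these via congruence yields $\sststile{}{} t = \sigma(O_{a_1}, \ldots, O_{a_k})$, and in particular $\sststile{}{} \sigma(O_{a_1}, \ldots, O_{a_k}) \downarrow$. At this point the second axiom scheme of $Lang(M)$ provides exactly what we need, provided we first know that $M(\sigma)(a_1, \ldots, a_k)$ is defined; setting $a \repl M(\sigma)(a_1, \ldots, a_k)$ then gives $\sststile{}{} t = O_a$.

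The main obstacle I anticipate is precisely that side condition: to invoke the axiom $\sigma(O_{a_1}, \ldots, O_{a_k}) = O_{M(\sigma)(a_1, \ldots, a_k)}$ one has to witness that $M(\sigma)(a_1, \ldots, a_k)$ is defined, and this information is not immediate from the proof of $\sststile{}{} t \downarrow$ alone. The way to extract it is to interpret that provable definedness assertion in the $Lang(M)$-model $A$ itself: the term $\sigma(O_{a_1}, \ldots, O_{a_k})$ denotes $M(\sigma)(a_1, \ldots, a_k)$, so $\sststile{}{} \sigma(O_{a_1}, \ldots, O_{a_k}) \downarrow$ forces this element to exist. Once this interpretive bridge between syntax and the ambient model is set up, the existence step becomes a clean induction, and uniqueness then follows from the same soundness principle that made the bridge work.
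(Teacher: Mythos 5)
Your proof is correct and follows essentially the same route as the paper's: uniqueness via soundness of the $Lang(M)$-structure $(A,\alpha',\beta')$, and existence by induction on $t$, with the key step of reading off definedness of $M(\sigma)(a_1,\ldots,a_k)$ from the derivability of $\sststile{}{} \sigma(O_{a_1},\ldots,O_{a_k})\!\downarrow$ by interpreting it in $A$. The only cosmetic difference is that you attribute the passage from $\sststile{}{} t\downarrow$ to $\sststile{}{} t_i\downarrow$ to regularity, whereas it is really the strictness of operations in partial Horn logic; the paper leaves this step implicit.
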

\begin{proof}
Since $(A,\alpha',\beta')$ is a model of $Lang(M)$, for every theorem $\varphi \sststile{}{V} \psi$ of $Lang(M)$
    and every total function $f : V \to A$, if $\beta'(f)(\varphi) = \top$, then $\beta'(f)(\psi) = \top$.
In particular, if $\sststile{}{} O_a = O_{a'}$, then $a = a'$.
Hence if $\sststile{}{} t = O_a$ and $\sststile{}{} t = O_{a'}$, then $a = a'$, so such $a$ is unique.

Let us prove its existence.
We do this by induction on $t$.
If $t = O_a$, then we are done.
If $t = \sigma(t_1, \ldots t_k)$, then by induction hypothesis, $\sststile{}{} t = \sigma(O_{a_1}, \ldots O_{a_k})$ for some $a_1$, \ldots $a_k$.
Note that if $\sststile{}{} \sigma(O_{a_1}, \ldots O_{a_k})\!\!\downarrow$ is derivable, then $M(\sigma)(a_1, \ldots a_k)$ is defined.
Thus $\sststile{}{} \sigma(O_{a_1}, \ldots O_{a_k}) = O_{M(\sigma)(a_1, \ldots a_k)}$ is also derivable.
\end{proof}

For every morphism $h : M \to N$ of models of $T$, we can define a morphism $Lang(h) : Lang(M) \to Lang(N)$ of theories under $T$ as $Lang(h)(O_a) = O_{h(a)}$.
Thus $Lang$ is a functor $\Mod{T} \to T/\Th_\mathcal{S}$.

\begin{prop}[lang-ff]
$Lang$ is fully faithful.
\end{prop}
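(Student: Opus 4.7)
The plan is to unwind the definition of $Lang$ and reduce everything to \rlem{cl-term}. Since a morphism $g : Lang(M) \to Lang(N)$ in $T/\Th_\mathcal{S}$ is by definition the identity on the image of $T$, all the data of $g$ is concentrated in its values on the new generators $O_a$, for $a$ ranging over the underlying sets $A_s$ of $M$.

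First I would establish faithfulness. If $h_1, h_2 : M \to N$ are morphisms of models with $Lang(h_1) = Lang(h_2)$, then $O_{h_1(a)} = Lang(h_1)(O_a) = Lang(h_2)(O_a) = O_{h_2(a)}$ for each $a \in A_s$. The uniqueness clause of \rlem{cl-term} (or equivalently, the fact that distinct elements of $N$ give distinct constants) forces $h_1(a) = h_2(a)$.

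The main step is fullness. Given $g : Lang(M) \to Lang(N)$ under $T$, for each $a \in A_s$ the axiom $\sststile{}{} O_a \downarrow$ of $Lang(M)$ is sent by $g$ to a theorem $\sststile{}{} g(O_a) \downarrow$ of $Lang(N)$. Applying \rlem{cl-term} in $Lang(N)$, there is a unique $b \in B_s$ (where $B$ is the underlying set of $N$) with $\sststile{}{} g(O_a) = O_b$. Define $h(a) \repl b$, which gives a candidate function $h : M \to N$. To see that $h$ is a morphism of models of $T$, note that for every $\sigma \in \mathcal{F}$ and every tuple with $M(\sigma)(a_1, \ldots, a_k)$ defined, the axiom $\sststile{}{} \sigma(O_{a_1}, \ldots, O_{a_k}) = O_{M(\sigma)(a_1, \ldots, a_k)}$ of $Lang(M)$ is mapped by $g$ (which acts as identity on $\sigma$) to $\sststile{}{} \sigma(O_{h(a_1)}, \ldots, O_{h(a_k)}) = O_{h(M(\sigma)(a_1, \ldots, a_k))}$ in $Lang(N)$. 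The left-hand side is derivably defined and equal to $O_{N(\sigma)(h(a_1), \ldots, h(a_k))}$ by the corresponding axiom of $Lang(N)$; uniqueness in \rlem{cl-term} then gives $h(M(\sigma)(a_1, \ldots, a_k)) = N(\sigma)(h(a_1), \ldots, h(a_k))$, so $h$ commutes with operations. An analogous argument using $R(O_{a_1}, \ldots, O_{a_k})$ shows that $h$ preserves predicates, so $h \in \Mod{T}(M, N)$.

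Finally, $Lang(h) = g$ because the two morphisms agree on every symbol of $T$ (both are morphisms under $T$) and on every new generator $O_a$ by construction of $h$; since $Lang(M)$ is generated by these as a theory, they coincide. The only step that requires care is the verification that $h$ lands in $\Mod{T}$: one must handle partial operations by observing that $\sigma(O_{a_1}, \ldots, O_{a_k})\!\downarrow$ in $Lang(M)$ is exactly what guarantees $M(\sigma)(a_1, \ldots, a_k)$ is defined, and similarly after applying $g$, so the two sides of each equation are simultaneously defined and \rlem{cl-term} applies to both.
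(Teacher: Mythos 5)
Your proposal is correct and follows essentially the same route as the paper's proof: faithfulness via the uniqueness clause of \rlem{cl-term}, and fullness by using \rlem{cl-term} to extract $h(a)$ from $g(O_a)$ and then transporting the axioms of $Lang(M)$ through $g$ to verify that $h$ preserves operations and predicates. The only difference is presentational — you spell out slightly more explicitly why $g(O_a)\!\downarrow$ is derivable and why $Lang(h) = g$, both of which the paper leaves implicit.
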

\begin{proof}
Let $h_1$, $h_2$ be morphisms of models such that $Lang(h_1) = Lang(h_2)$.
Then $O_{h_1(a)} = Lang(h_1)(O_a) = Lang(h_2)(O_a) = O_{h_2(a)}$, and by \rlem{cl-term}, $h_1(a) = h_2(a)$.
Thus $Lang$ is faithful.

Let $M_1 = (A_1,\alpha_1,\beta_1)$ and $M_2 = (A_2,\alpha_2,\beta_2)$ be models of $T$,
    and let $h : Lang(M_1) \to Lang(M_2)$ be a morphism of theories under $T$.
Then by \rlem{cl-term}, for every $a \in A_1$, there is a unique $h'(a) \in A_2$ such that $\sststile{}{} h(O_a) = O_{h'(a)}$ is a theorem of $Lang(M_2)$.
Let us show that $h' : A_1 \to A_2$ is a morphism of models $M_1$ and $M_2$.
Indeed, if $M_1(\sigma)(a_1, \ldots a_k)$ is defined, then $\sststile{}{} \sigma(O_{a_1}, \ldots O_{a_k}) = O_{M_1(\sigma)(a_1, \ldots a_k)}$ is a theorem of $Lang(M_1)$.
Hence \[ \sststile{}{} \sigma(O_{h'(a_1)}, \ldots O_{h'(a_k)}) = O_{h'(M_1(\sigma)(a_1, \ldots a_k))} \] is a theorem of $Lang(M_2)$.
But \[ \sststile{}{} \sigma(O_{h'(a_1)}, \ldots O_{h'(a_k)}) = O_{M_2(\sigma)(h'(a_1), \ldots h'(a_k))} \] is also a theorem of $Lang(M_2)$.
Hence by \rlem{cl-term}, $h'(M_1(\sigma)(a_1, \ldots a_k)) = M_2(\sigma)(h'(a_1), \ldots h'(a_k))$.

If $(a_1, \ldots a_k) \in M_1(R)$, then $\sststile{}{} R(O_{a_1}, \ldots O_{a_k})$ is a theorem of $Lang(M_1)$.
Hence $\sststile{}{} R(O_{h'(a_1)}, \ldots O_{h'(a_k)})$ is a theorem of $Lang(M_2)$.
Since $M_2$ is a model of $Lang(M_2)$, it follows that $(h'(a_1), \ldots h'(a_k)) \in M_2(R)$.

Thus $h'$ is a morphism of models.
Note that by definition of $h'$, $Lang(h') = h$.
Hence $Lang$ is full.
\end{proof}

Now, let us describe a functor $Syn : T/\Th_\mathcal{S} \to \Mod{T}$.
For every $i : T \to T'$, let $Syn(i) = i^*(0_{T'})$, where $0_{T'}$ is the initial object of $\Mod{T'}$,
and $i^* : \Mod{T'} \to \Mod{T}$ is the functor that was defined in \cite{alg-tt}.
If $f : T_1 \to T_2$ is a morphism of theories under $T$, then let $Syn(f) = i_1^*(!_{f^*(0_{T_2})})$,
where $!_{f^*(0_{T_2})}$ is the unique morphism $0_{T_1} \to f^*(0_{T_2})$.

The construction of initial models of partial Horn theories was given in \cite{PHL}.
Let us repeat it here.
Let $T = ((\mathcal{S},\mathcal{F},\mathcal{P}),\mathcal{A})$ be a standard partial Horn theory.
First, we define a partial equivalence relations on sets $Term_\mathcal{F}(\varnothing)$ as $t_1 \sim t_2$ if and only if $\sststile{}{} t_1 = t_2$ is a theorem of $T$.
The interpretation of $R \in \mathcal{P}$ consists of tuples $(t_1, \ldots t_k)$ such that $\sststile{}{} R(t_1, \ldots t_k)$ is derivable in $T$.
Then $\mathcal{S}$-set $Term_\mathcal{F}(\varnothing)/\!\sim$ has a natural structure of a model of $((\mathcal{S},\mathcal{F},\mathcal{P}),\mathcal{A})$, and this model is initial.

\begin{prop}[syn-lang]
$Syn$ is right adjoint to $Lang$.
\end{prop}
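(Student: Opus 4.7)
The plan is to construct a natural bijection
$$\mathrm{Hom}_{T/\Th_\mathcal{S}}(Lang(M), T') \;\cong\; \mathrm{Hom}_{\Mod{T}}(M, Syn(T'))$$
for each $M \in \Mod{T}$ and each $i : T \to T'$ in $T/\Th_\mathcal{S}$. Recall that $Syn(T') = i^*(0_{T'})$, and by the construction of initial models recalled immediately before the proposition, elements of $(0_{T'})_s$ are equivalence classes $[t]$ of closed $\mathcal{F}'$-terms $t$ with $\sststile{}{} t \downarrow$ modulo provable equality in $T'$; passing along $i^*$ just forgets those operations of $T'$ not coming from $T$.

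Forward map $\Phi$: given a morphism of theories $f : Lang(M) \to T'$ under $T$ and $a \in A_s$, the axiom $\sststile{}{} O_a \downarrow$ of $Lang(M)$ is sent by $f$ to $\sststile{}{} f(O_a) \downarrow$ in $T'$, so $f(O_a)$ represents an element $\Phi(f)(a) \repl [f(O_a)]$ of $(0_{T'})_s$. Applying $f$ to the two remaining axiom schemata of $Lang(M)$ yields exactly the conditions that $\Phi(f)$ commutes with each operation $\sigma \in \mathcal{F}$ of $T$ (viewed through $i$ in $i^*(0_{T'})$) and that it preserves each relation $R \in \mathcal{P}$, so $\Phi(f)$ is a morphism of $T$-models $M \to Syn(T')$.

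Inverse $\Psi$: given a morphism $g : M \to Syn(T')$ of $T$-models, pick for each $a \in A_s$ a representative closed term $t_a$ in $T'$ with $[t_a] = g(a)$, and let $\Psi(g) : Lang(M) \to T'$ extend $i$ by $O_a \mapsto t_a$. The three axiom schemata of $Lang(M)$ become, under this assignment, statements derivable in $T'$: definedness of $t_a$ holds because $[t_a]$ exists in $(0_{T'})_s$; the equations $\sststile{}{} \sigma(t_{a_1},\ldots,t_{a_k}) = t_{M(\sigma)(a_1,\ldots,a_k)}$ and the atomic formulae $\sststile{}{} R(t_{a_1},\ldots,t_{a_k})$ both follow from $g$ being a morphism of $T$-models into $i^*(0_{T'})$. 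Any two representatives of the same class are by construction provably equal in $T'$, so $\Psi(g)$ is well-defined as a morphism of theories under $T$.

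That $\Phi$ and $\Psi$ are mutually inverse is now immediate: $\Phi(\Psi(g))(a) = [t_a] = g(a)$, while $\Psi(\Phi(f))$ agrees with $f$ on $T$ and satisfies $\sststile{}{} \Psi(\Phi(f))(O_a) = f(O_a)$ in $T'$, hence equals $f$ as a morphism of theories. Naturality in $M$ and $T'$ is a direct check using functoriality of $Lang$ (via \rprop{lang-ff}), of $i^*$, and of the initial-model assignment $T' \mapsto 0_{T'}$. The only point requiring care—mirroring what was implicit in \rprop{lang-ff}—is the convention that morphisms in $\Th_\mathcal{S}$ are taken up to provable equality of the target terms, which is what makes $\Psi$ independent of the choice of representatives; once this is fixed, the proof is essentially the universal property of the free extension $Lang(M) = T[\{O_a\}_{a \in M}]/(\text{axioms reflecting }M)$ combined with the universal property of $0_{T'}$.
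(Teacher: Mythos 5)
Your proof is correct and takes essentially the same route as the paper's: both rest on the description of $Syn(T')$ as equivalence classes of closed terms of $T'$ and on translating the axioms of $Lang(M)$ back and forth along $O_a \leftrightarrow$ closed terms, the paper merely packaging the bijection as the counit $\epsilon_{T'}(O_t) = t$ together with its universal property (using Lemma~\ref{lem:cl-term} for uniqueness). The convention you flag---morphisms of theories considered up to provable equality of the assigned terms, which makes $\Psi$ independent of representatives---is the same one the paper relies on implicitly when it defines $\epsilon_{T'}$ by choosing representatives.
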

\begin{proof}
Let $\epsilon_{T'} : Lang(Syn(T')) \to T'$ be defined as $\epsilon_{T'}(O_t) = t$.
It is easy to see that $\epsilon_{T'}$ preserves axioms of $Lang(Syn(T'))$.
Moreover, $\epsilon$ is natural in $T'$.
Let us prove that $\epsilon$ is the counit of the adjunction.
Let $f : Lang(M) \to T'$ be a morphism.
Then we need to show that there is a unique morphism $g : Lang(M) \to Lang(Syn(T'))$ such that $\epsilon_{T'} \circ g = f$.
By \rlem{cl-term}, there is a unique $t$ such that $g(O_a) = O_t$.
Since $t = \epsilon_{T'}(g(O_a)) = f(O_a)$, $g$ must satisfy equation $g(O_a) = O_{f(O_a)}$.
Thus $g$ is unique.
It is easy to see that this $g$ preserves axioms of $Lang(M)$; hence it defines a morphism $g : Lang(M) \to Lang(Syn(T'))$.
\end{proof}

\begin{rem}[colimits]
Propositions \nprop{lang-ff} and \nprop{syn-lang} imply that colimits of models can be constructed as follows:
\[ colim_{j \in J}(M_j) = Syn(Lang(colim_{j \in J}(M_j))) = Syn(colim_{j \in J}(Lang(M_j))). \]
Since colimits of theories have simple explicit description (see \cite{alg-tt}), this gives us explicit description of colimits of models.
\end{rem}

For every morphism of theories $f : T \to T'$ there is a functor $f^* : \Mod{T'} \to \Mod{T}$ which was constructed in \cite{alg-tt}.
We also can define functor $f_! : \Mod{T} \to \Mod{T'}$ as $f_!(M) = Syn(Lang(M) \amalg_{T} T')$.
It was shown in \cite{PHL} that $f_!$ is left adjoint to $f^*$.
This theorem was proved there only for a weaker notion of morphisms of theories, but the proof also works for general morphisms as defined in \cite{alg-tt}.

Functor $f_!$ can be used to present a model of a theory by generators and relations.
Let $T$ be a fixed $\mathcal{S}$-theory.
Note that models of the empty theory are just $\mathcal{S}$-sets.
If $f : 0 \to T$ is the unique morphism from the empty theory, then $f^*(M)$ is just the underlying $\mathcal{S}$-set of $M$,
    and $f_!(X)$ is the free model of $T$ on $\mathcal{S}$-set $X$.
We will denote this free model by $F(X)$.
If $R$ is a set of axioms in the language of theory $Lang(X) \amalg T$,
    then let $F(X,R)$ be a model of $T$ defined as $Syn(Lang(X) \amalg T \cup R)$.
By definition of $Syn$, to construct a morphism $F(X,R) \to M$ it is necessary and sufficient
    to construct a morphism from $X$ to the underlying $\mathcal{S}$-set of $M$ such that relations from $R$ are true in $M$.

Sometimes we will omit the set of generators if it can be inferred from the set of relations.
For examples, we will write $F(\{\,\vdash p : Id(A,a,a')\,\})$ for the model $F(\{\,a : (tm,0), a' : (tm,0), A : (ty,0), p : (tm,0)\,\}, \{\,ty(p) = Id(A,a,a')\,\})$.
Another examples is $F(\{\,A_1, \ldots A_n \vdash a : A\,\})$ which equals to $F(\{\,A_i : (ty,i), A : (ty,n), a : (tm,n)\,\}, \{\,ty(a) = A, ft^{i+1}(A) = A_{n-i}\,\})$.
Thus this model is isomorphic to the free model $F(\{\,a : (tm,n)\,\})$.

\subsection{Model structure}

To construct a model structure on $\Mod{T}$, we need to recall a few definitions from \cite{f-model-structures}.
A reflexive path object $P(X)$ for an object $X$ is any factorization of the diagonal $X \to X \times X$.
A reflexive cylinder object $C_U(V)$ for a map $i : U \to V$ is any factorization of $[id_V,id_V] : V \amalg_U V \to V$.
Maps $f,g : V \to X$ are homotopic relative to a cylinder object $[\cyli_0,\cyli_1] : V \amalg_U V \to C_U(V)$, if there exists a map $h : C_U(V) \to X$
such that $h \circ \cyli_0 = f$ and $h \circ \cyli_1 = g$.
In this case we will write $f \sim_i g$.
We say that a map $f : X \to Y$ has RLP up to $\sim_i$ with respect to $i : U \to V$ if for every commutative square of the form
\[ \xymatrix{ U \ar[r]^u \ar@{}[dr]|(.7){\sim_i} \ar[d]_i & X \ar[d]^f \\
              V \ar[r]_v \ar@{-->}[ur]^g                  & Y,
            } \]
there is a dotted arrow $g : V \to X$ such that $g \circ i = u$ and $(f \circ g) \sim_i v$.

We will also need the following theorem from \cite{f-model-structures}:
\begin{thm}[model-structures]
Let $\C$ be a complete and cocomplete category, and let $\I$ be a set of maps of $\C$
such that the domains and the codomains of maps in $\I$ are cofibrant and small relative to $\Icell$.
For every $i : U \to V \in \I$, choose a reflexive relative cylinder object $C_U(V)$
such that $[\cyli_0,\cyli_1] : V \amalg_U V \to C_U(V) \in \Icof$.
Let $\J_\I = \{\ \cyli_0 : V \to C_U(V)\ |\ i : U \to V \in \I \ \}$, and
let $\we_\I$ be the set of maps which have RLP up to $\sim_i$ with respect to every $i \in \I$.

Suppose that for every object $X$, there exists a reflexive path object $P(X)$ such that the following conditions hold:
\begin{enumerate}
\item $p_0$ has RLP with respect to $\I$.
\item For every $f : X \to Y$, there exists a morphism of path objects $(f,P(f)) : P(X) \to P(Y)$,
\item For every object $X$, there exists a map $s : P(X) \to P(X)$ such that $p_0 \circ s = p_1$ and $p_1 \circ s = p_0$.
\item Either maps $\langle p_0, p_1 \rangle : P(X) \to X \times X$ have RLP with respect to $\J_\I$
or maps in $\Jcell[_\I]$ have RLP up to $\sim^{r*}$ with respect to the domains of maps in $\I$.
\end{enumerate}
Then there exists a cofibrantly generated model structure on $\C$ with $\I$ as a set of generating cofibrations,
$\J_\I$ as a set of generating trivial cofibrations, and $\we_\I$ as a class of weak equivalences.
\end{thm}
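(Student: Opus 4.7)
The strategy is a cofibrantly generated recognition principle, with generating cofibrations $\I$ and generating trivial cofibrations $\J_\I$. Completeness and cocompleteness of $\C$ together with the smallness hypothesis allow the small object argument to produce functorial factorizations in which the right halves have RLP with respect to $\I$ and $\J_\I$ respectively. Declaring cofibrations $= \Icof$, fibrations $=$ maps with RLP against $\J_\I$, and weak equivalences $= \we_\I$, what remains is to verify that $\we_\I$ satisfies $2$-out-of-$3$ and retract closure, and that the trivial fibrations coincide with maps having RLP against $\I$ while the trivial cofibrations coincide with $\Icof[\J_\I]$.

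First I would reconcile the cylinder homotopy $\sim_i$ on maps $V \to X$ with the path homotopy coming from $P(X)$. Conditions (2) and (3) supply a functorial morphism of path objects and a symmetry $s$, and together with $V \amalg_U V \to C_U(V) \in \Icof$ they imply that either homotopy can be transported into the other on the codomains of maps in $\I$. This gives an equivalent reformulation of $\we_\I$ purely in terms of path-homotopy lifting, from which $2$-out-of-$3$ and retract stability of $\we_\I$ follow formally from the concatenation and inversion of path homotopies afforded by $P$ and $s$.

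Next I would identify the trivial fibrations. The inclusion ``RLP against $\I$'' $\subseteq$ ``RLP against $\J_\I$'' $\cap \we_\I$ is immediate from $\J_\I \subseteq \Icof$. For the converse, given such an $f$ and a lifting problem against $i : U \to V \in \I$, being a weak equivalence produces a lift $g$ with $f \circ g \sim_i v$; condition (4) then lets one correct $g$ to a strict lift, either by lifting the witnessing homotopy $C_U(V) \to Y$ along $\cyli_0$ (first disjunct) or by a dual path-homotopy argument on the domains of $\I$ (second disjunct).

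The identification of trivial cofibrations with $\Icof[\J_\I]$ is where I expect the main obstacle. The inclusion $\Icof[\J_\I] \subseteq \we_\I$ reduces to the elementary case $\cyli_0 : V \to C_U(V)$: condition (1) puts $p_0 : P(X) \to X$ in the class with RLP against $\I$, so any square from $i \in \I$ lifts against $p_0$ and hence, via $p_1$, against $\cyli_0$ up to path homotopy. For the reverse inclusion, I would use the standard retract trick: factor $f \in \Icof \cap \we_\I$ as $p \circ j$ with $j \in \Icof[\J_\I]$ and $p$ having RLP against $\J_\I$; by $2$-out-of-$3$ and the previous paragraph $p$ has RLP against $\I$, so $f$ lifts against $p$, realizing $f$ as a retract of $j$ and thus placing $f$ in $\Icof[\J_\I]$. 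Threading condition (4) through this retract argument — so that the homotopy-up-to lifting of $f$ against $\I$ produces the strict lift of $f$ against $p$ — is the delicate point.
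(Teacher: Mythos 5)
First, a point of reference: the paper does not prove this theorem at all --- it is imported verbatim from \cite{f-model-structures} and used as a black box --- so there is no in-paper argument to compare your proposal against. Judged on its own, your outline follows the standard recognition-principle architecture (small object argument for both factorizations, identification of the trivial fibrations with the $\I$-injectives, the retract trick for trivial cofibrations), and that architecture is the right one for a theorem of this shape.

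Two points, however, need attention. The serious one is 2-out-of-3 for $\we_\I$, which you dismiss as following ``formally from concatenation and inversion of path homotopies.'' The hard case is: given $g$ and $g \circ f$ in $\we_\I$, show $f \in \we_\I$. A lifting problem from $i : U \to V$ against $f$ composes to one against $g \circ f$, producing $h$ with $h \circ i = u$ and $g \circ f \circ h \sim_i g \circ v$; one must then descend the witnessing homotopy $C_U(V) \to Z$ to a homotopy $f \circ h \sim_i v$ in $Y$, i.e.\ one needs $g$ to \emph{reflect} $\sim_i$-homotopies between maps out of $V$ that agree on $U$. Since $\sim_i$ is defined only for the chosen cylinders of maps in $\I$, while the comparison map $[\cyli_0,\cyli_1]$ is merely in $\Icof$, this requires the full interchange between the relative cylinders and the path objects $P(-)$, and it is precisely here that conditions (2), (3) and the hypotheses that domains and codomains of $\I$ are cofibrant and that $[\cyli_0,\cyli_1] \in \Icof$ do real work; it is not formal. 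The minor point is that condition (4) is invoked in the wrong place: upgrading a homotopy lift of $f$ against $i$ to a strict lift, when $f$ has RLP with respect to $\J_\I$, is done simply by lifting the witnessing homotopy $C_U(V) \to Y$ against $f$ along $\cyli_0 \in \J_\I$ and evaluating at $\cyli_1$ (using $\cyli_0 \circ i = \cyli_1 \circ i$) --- this uses only the fibration hypothesis on $f$. Condition (4) is instead what drives the inclusion $\Jcell[_\I] \subseteq \we_\I$, and hence the correctness of the (trivial cofibration, fibration) factorization, which is the step your sketch treats most lightly.
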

Here $\sim^{r*}$ denotes the reflexive transitive closure of the relation of right homotopy with respect to $P(X)$.

Let $\I_{tm}$ be the set of the following morphisms:
\[ i_{(tm,n)} : F(\{\,\Gamma \vdash A\ type\,\}) \to F(\{\,\Gamma \vdash a : A\,\}) \]
Let $\I_{ty}$ be the set of the following morphisms:
\[ i_{(ty,n)} : F(\{\,\Gamma \vdash ctx\,\}) \to F(\{\,\Gamma \vdash A\ type\,\}) \]
The set $\I$ of generating cofibrations is the union $\I_{tm} \cup \I_{ty}$.

For every $i : U \to V \in \I$, we need to define a relative cylinder object $C_U(V)$.
Let $C_{F(\{\,\Gamma \vdash A\ type\,\})}(F(\{\,\Gamma \vdash a : A\,\}))$ be equal to
\[ F(\{\,(\Gamma \vdash A\ type), (\Gamma, I \vdash h : A\!\uparrow)\,\}), \]
$\cyli_0(a) = h[left]$, $\cyli_1(a) = h[right]$, and let $s : C_U(V) \to V$ be defined as $s(h) = a\!\uparrow$.
Let $C_{F(\{\,\Gamma \vdash ctx\,\})}(F(\{\,\Gamma \vdash A\ type\,\})) = Syn(Eq + T)$, where $Eq$ is the theory defined in subsection~\ref{sec:univalence};
$\cyli_0(A) = A$, $\cyli_1(A) = B$, and let $s : C_U(V) \to V$ be defined as follows:
$s(A) = A$, $s(B) = A$, $s(\lambda x.\,b) = s(\lambda y.\,a_1) = s(\lambda y.\,a_2) = \lambda x.\,x$,
$s(\lambda x.\,p_1) = s(\lambda y.\,q_1) = s(\lambda x.\,p_2) = s(\lambda y.\,q_2) = \lambda x.\,refl(x)$.

Note that for every $i : U \to V \in \I$, $[\cyli_0,\cyli_1] : V \amalg_U V \to C_U(V)$ belongs to $\Icell[\I_{tm}]$.
If $i \in \I_{tm}$, then $[\cyli_0,\cyli_1]$ is (isomorphic to) $F(\{\,(\Gamma \vdash a : A), (\Gamma \vdash a' : A)\,\}) \to F(\{\,(\Gamma \vdash A), (\Gamma, I \vdash h : A\!\uparrow)\,\})$,
and this map is isomorphic to $F(\{\,(\Gamma \vdash a : A), (\Gamma \vdash : a' : A)\,\}) \to F(\{\,(\Gamma \vdash a : A), (\Gamma \vdash a' : A), (\Gamma \vdash p : a \idtype a')\,\})$,
which is obviously a pushout of a map from $\I_{tm}$.
If $i \in \I_{ty}$, then it is easy to see that $[\cyli_0,\cyli_1]$ is a composition of five maps which are pushouts of maps from $\I_{tm}$.

There is another class of cylinder objects for maps in $\I_{ty}$.
Let $C'_{F(\{\,\Gamma \vdash ctx\,\})}(F(\{\,\Gamma \vdash A\ type\,\}))$ be equal to $F(\{\,\Gamma, i : I \vdash H\ type\,\})$, $\cyli_0(A) = H[i \repl left]$, $\cyli_1(A) = H[i \repl right]$.
We cannot use these cylinder objects directly since $[\cyli_0,\cyli_1]$ is not a cofibration; nevertheless, they will be useful later.
We will denote the set of maps of the form $\cyli_0 : F(\{\,\Gamma \vdash A\ type\,\}) \to C'_{F(\{\,\Gamma \vdash ctx\,\})}(F(\{\,\Gamma \vdash A\ type\,\}))$ by $\J'_{\I_{ty}}$.

Now, let us describe a general definition of a functor $P : \Mod{T} \to \Mod{T}$ that works for every stable theory $(T,\alpha)$.
Let $P(X)_{(p,n)} = \{\,a \in P(X)_{(p,n+1)}\ |\ ctx^n(a) = I\,\}$.
For every function and predicate symbol $S$, define $P(X)(S)(a_1, \ldots a_k)$ as $X(\alpha(L(S)))(a_1, \ldots a_k)$.
Since $\alpha$ preserves theorems, this definition satisfies axioms of $T$; hence it is a correct definition of a model of $T$.
For every morphism of models $f : X \to Y$, let $P(f)(a) = f(a)$.
The fact that $f$ is a morphism of models implies that $P(f)$ is a morphism too.
It is obvious that $P$ preserves identity morphisms and compositions.

To define the structure of a path object on $P(X)$, we need to assume that $(T,\alpha)$ is regular.
In this case, we define $t : X \to P(X)$ as $t(a) = I \times a$, and $p_0,p_1 : P(X) \to X$ as $p_0(a) = left^*(a)$ and $p_1(a) = right^*(a)$.
The regularity condition ensures that function and predicate symbols are stable under operations $I \times -$ and $c^*(-)$.
Hence these definitions indeed determine morphisms of models.
The fact that $p_0 \circ t = p_1 \circ t = id_X$ follows from properties of operation $subst^n$.
If $T$ is under $HPath + coe_0$, then we can define $s : P(X) \to P(X)$ as $s(a) = subst^n(I, a, inv(v_0))$, where
\[ inv(i) = at(right, left, coe_0(v_0 \idtype left, refl(left)), i). \]

We will prove some of the conditions of \rthm{model-structures} in the following lemmas:
\begin{lem}[Jtm]
Maps $\langle p_0, p_1 \rangle : P(X) \to X \times X$ have RLP with respect to $\J_{\I_{tm}}$.
\end{lem}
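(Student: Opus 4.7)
The plan is to unpack a lifting square against $\cyli_0 : V \to C_U(V) \in \J_{\I_{tm}}$, where $V = F(\{\Gamma \vdash a : A\})$ and $C_U(V) = F(\{(\Gamma \vdash A\ type), (\Gamma, I \vdash h : A\!\uparrow)\})$, reduce the lift to a two-dimensional filling problem inside $X$, and then solve it with the filler operations from subsection~\ref{sec:fillers}. By freeness of $V$ and $C_U(V)$ and the universal property of $X \times X$, the square amounts to the following data in $X$: a ``type-path'' $\tilde A$ in context $\Gamma, I_{path}$ together with a term-path $\tilde a$ of type $\tilde A$ (this is $u$), and, for each $k \in \{0, 1\}$, a type $A_k$ in context $\Gamma$ and a term $h_k$ in context $\Gamma, I_i$ of type $A_k\!\uparrow$ (these are the projections of $v$). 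Commutativity of the square forces $\tilde A[I_{path} \repl c_k] = A_k$ and $\tilde a[I_{path} \repl c_k] = h_k[I_i \repl left]$, where $c_0 = left$ and $c_1 = right$.

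Next I would observe that any lift $w : C_U(V) \to P(X)$ is forced to send $A$ to $\tilde A$, while $w(h)$ must be a two-dimensional cell in $X$ over context $\Gamma, I_i, I_{path}$, of type $\tilde A$ weakened trivially along $I_i$, with prescribed faces
\[ w(h)[I_i \repl left] = \tilde a, \quad w(h)[I_{path} \repl left] = h_0, \quad w(h)[I_{path} \repl right] = h_1. \]
The pairwise compatibility of these three faces on their common edges is precisely the commutativity identities above; thus they assemble into a two-dimensional cubical horn in $X$, with only the $I_i = right$ face unspecified. One then fills this horn using $Fill^2_{(tm, 2+n)}$ from subsection~\ref{sec:fillers}, which is available because $T$ lies under $coe_1 + Path$ and hence the morphism $Fill_{tm} \to coe_1 + Path$ constructed there interprets $Fill^2$ inside $T$. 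Identifying the $Fill^2$ coordinates as $x_0 = I_{path}$ and $x_1 = I_i$, one sets
\[ w(h) \repl Fill^2(\tilde A, h_0, h_1, \tilde a); \]
the defining axioms $[i = c]^*(Fill^n) = d_{[i = c]}$ yield precisely the three required boundary equalities, and freeness of $C_U(V)$ on $\{A, h\}$ extends $A \mapsto \tilde A$, $h \mapsto w(h)$ to a well-defined morphism $w : C_U(V) \to P(X)$ satisfying $w \circ \cyli_0 = u$ and $\langle p_0, p_1 \rangle \circ w = v$.

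The main obstacle I anticipate is the variable-ordering bookkeeping: one has to verify that the ``path variable'' added by the endofunctor $P$ and the ``original'' $I_i$ from the context of $h$ occupy the correct positions inside the $(n+2)$-context of $w(h)$ in $X$, so that the face maps $[0 = c]^*$ and $[1 = left]^*$ appearing in the $Fill^n$ axiomatization match the substitutions $I_{path} \repl c_k$ and $I_i \repl left$ demanded of $w(h)$. Once the De Bruijn indices are lined up, the remaining verification is immediate from the filler axioms and the forced assignment $w(A) = \tilde A$.
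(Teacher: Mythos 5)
Your unpacking of the lifting square is correct and matches the paper's: a lift against $\cyli_0 : V \to C_U(V) \in \J_{\I_{tm}}$ amounts to finding a term $x : I, \Delta, y : I \vdash t : A$ with the three faces $t[y \repl left]$, $t[x \repl left]$, $t[x \repl right]$ prescribed, i.e.\ a two-dimensional horn filling with the $y = right$ face open. The gap is in how you fill it. You invoke $Fill^2_{(tm,2+n)}$ and justify its availability by the morphism constructed in subsection~\ref{sec:fillers}; but that morphism is $Fill_{tm} \to coe_1 + Path$, where $Fill_{tm}$ contains only the \emph{global} operations $Fill^n_{(tm,n+0)}$ (context exactly $I^n$, nothing after). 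The \emph{local} operation $Fill^2_{(tm,2+n)}$ you need belongs to $Fill^l_{tm}$, and the paper explicitly declines to interpret that theory (``it might be possible to construct a map from $Fill^l_{tm}$ to $coe_2+\beta_2+Path$, but it is more complicated and we will not need this construction''). Indeed the paper's own proof opens by saying that such fillers \emph{cannot} be constructed in general. There is also a second, related problem you dismiss as bookkeeping: the two interval variables sit at opposite ends of the context ($x$ at the front, $y$ after $\Delta$), whereas $Fill^2_{(tm,2+k)}$ requires them adjacent at the front; permuting variables past a dependent telescope is not an available operation.

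What actually makes the lemma true is the special feature you do not use: the type $A$ and the telescope $\Delta$ depend only on the $P$-direction $x$ and not on the cylinder direction $y$. The paper exploits this exactly as in the construction of $coe^{l'}_1$: writing $\Delta = (z_1 : B_1, \ldots, z_k : B_k)$, one transports arbitrary arguments $b_m$ of the fibre over $x = i$ along $x$ using $coe_2$ (available since $T$ is under $coe_1 + \sigma + Path$, via the maps in the diagram of theories), thereby reducing the local filling problem to a \emph{global} two-dimensional one for the substituted faces $f_1[\overline{z} \repl \overline{b'(x)}]$, $f_0[\overline{z} \repl \overline{b'(left)}]$, $f_2[\overline{z} \repl \overline{b'(right)}]$, which the global $Fill^2_{(tm,2)}$ (hence $coe_1 + Path$) does handle. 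Without this reduction step your argument does not go through as stated.
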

\begin{proof}
We are given a type $x : I, \Delta \vdash A\ type$ and terms $x : I, \Delta \vdash f_1 : A$, $\Delta[x \repl left], y : I \vdash f_0 : A[x \repl left]$ and $\Delta[x \repl right], y : I \vdash f_2 : A[x \repl right]$,
and we need to find a term $t$ that satisfies $x : I, \Delta, y : I \vdash t : A$, $t[y \repl left] = f_1$, $t[x \repl left] = f_0$ and $t[x \repl right] = f_2$.
Thus, $t$ is just a local two-dimensional filler.
We cannot construct such fillers in general, but we can do it in our case since $A$ depends only on one of the coordinates.
Thus, the construction will be similar to the construction of $coe^l_1$.

Let $\Delta$ be equal to $z_1 : B_1, \ldots z_k : B_k$.
Then to construct a term $t$, we just need for every $\vdash i : I$, $\vdash j : I$ and $\vdash b_1 : B_1[x \repl i]$, \ldots $\vdash b_k : B_k[x \repl i, z_1 \repl b_1, \ldots z_{k-1} \repl b_{k-1}]$,
to find a term $t'(i, j, b_1, \ldots b_k)$ such that $\vdash t'(i, j, b_1, \ldots b_k) : A[x \repl i, y_1 \repl b_1, \ldots y_k \repl b_k]$, $t'(i, left, b_1, \ldots b_k) = f_1[x \repl i, y_1 \repl b_1, \ldots y_k \repl b_k]$,
$t'(left, j, b_1, \ldots b_k) = f_0[y \repl j, y_1 \repl b_1, \ldots y_k \repl b_k]$ and $t'(right, j, b_1, \ldots b_k) = f_2[y \repl j, y_1 \repl b_1, \ldots y_k \repl b_k]$.
Thus, $t'$ is an analog of $coe^{l'}_1$.

First, let $b'_m(w)$ be equal to
\[ coe_2(\lambda x.\,B_m[y_1 \repl b_1'(x), \ldots y_{m-1} \repl b_{m-1}'(x)], i, b_m, w) \]
Then the following conditions are satisfied:
\begin{align*}
& \vdash b'_m(w) : B_m[x \repl w, z_1 \repl b'_1(w), \ldots z_{m-1} \repl b'_{m-1}(w)] \\
x : I & \vdash f_1[\overline{z} \repl \overline{b'(x)}] : A[\overline{z} \repl \overline{b'(x)}] \\
y : I & \vdash f_0[\overline{z} \repl \overline{b'(left)}] : A[x \repl left, \overline{z} \repl \overline{b'(left)}] \\
y : I & \vdash f_2[\overline{z} \repl \overline{b'(right)}] : A[x \repl right, \overline{z} \repl \overline{b'(right)}]
\end{align*}
\end{proof}

If our theory has some additional structure (local fillers for types),
then we can prove that maps $\langle p_0, p_1 \rangle : P(X) \to X \times X$ have RLP with respect to $\J_{\I_{ty}}$,
but we cannot do this in general.
Thus we will use the second option and will prove that objects of the form $F(\{\,\Gamma : (ctx,n)\,\})$ have LLP up to $\sim^{r*}$ with respect to $\Jcell[_\I]$.

\begin{lem}[Jty]
Pushouts of maps from $\J'_{\I_{ty}}$ have RLP up to $\sim^r$ with respect to objects of the form $F(\{\,\Delta : (ctx,n)\,\})$.
\end{lem}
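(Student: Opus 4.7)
The plan is to construct a retraction $r \colon Z \to Y$ of $f$ and then exhibit a right homotopy between $f \circ r \circ v$ and $v$ for each $v \colon F(\{\,\Delta : (ctx,n)\,\}) \to Z$. The lift required by the lemma will then be $g := r \circ v$.

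Since $f$ is a pushout of $\cyli_0 \colon F(\{\,\Gamma \vdash A\ type\,\}) \to F(\{\,\Gamma, i : I \vdash H\ type\,\})$ along some morphism $u$, the codomain $Z$ is obtained from $Y$ by freely adjoining a type $H$ over $\Gamma, i : I$ subject to $H[i \repl left] = u(A)$. The universal property of this pushout supplies a morphism $r \colon Z \to Y$ extending $\mathrm{id}_Y$ and sending $H$ to $u(A)\!\uparrow$, the weakening of $u(A)$ along the projection $\Gamma, i : I \to \Gamma$; this is compatible because $u(A)\!\uparrow[i \repl left] = u(A)$. In particular $r \circ f = \mathrm{id}_Y$.

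To witness $f \circ r \circ v \sim^r v$, we construct a map $h \colon F(\{\,\Delta\,\}) \to P(Z)$. Unfolding the definition of $P$, this amounts to producing a context in $Z$ of length $n{+}1$ starting with a fresh interval variable $j$, of the form $j : I, C_1, \ldots, C_n$, such that the substitution $j \repl left$ recovers the context $f(r(v(\Delta)))$ and $j \repl right$ recovers $v(\Delta)$. The types $C_k$ are built inductively on $k$ by following the construction of the $k$-th type $B_k$ of $v(\Delta)$ and replacing every occurrence of $H[\sigma]$ (for a substitution $\sigma$ into $\Gamma, i : I$) by $iso(u(A)\!\uparrow[\sigma'], H[\sigma'], \lambda x.\,\varphi, \lambda y.\,\psi, \ldots, j)$, where $\sigma'$ is the version of $\sigma$ adjusted to live over the already-interpolated context $C_1, \ldots, C_{k-1}$, $\varphi = coe_1(\lambda i.\,H, x, \sigma'(i))$ is the canonical forward map $u(A)\!\uparrow[\sigma'] \to H[\sigma']$, and $\psi$ is its inverse, obtained from the $coe_2$-style operation definable from $coe_1 + \sigma + Path$. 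Subterms of $B_k$ that already live in $Y$ are simply weakened along the projection forgetting $j$. The second and third axioms of $wUA$ guarantee that $C_k$ reduces definitionally to $r(B_k)$ at $j \repl left$ and to $B_k$ at $j \repl right$.

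The main obstacle is threading the interpolation through context dependencies: each $C_k$ must be well-typed over $C_{k-1}$ (which itself interpolates $B_{k-1}$), rather than over $B_{k-1}$ or $r(B_{k-1})$, so the substitutions encoding context extension have to be transported across the interpolating paths. The key computational tool is the fourth axiom of $wUA$, $coe_0(\lambda j.\,iso(\ldots,j), a) \deq \varphi[x \repl a]$, which identifies $coe_0$-transport along an $iso$-interpolation definitionally with the forward equivalence map. This definitional equation allows us to substitute appropriately coerced terms into each $\sigma'$ while still recovering exactly $r(B_k)$ and $B_k$ at the two endpoints, so no propositional coherence terms accumulate and $h$ is a strict right homotopy as required.
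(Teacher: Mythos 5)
Your choice of lift --- the retraction $r$ sending $H$ to $u(A)\!\uparrow$ --- is the right one and agrees with the paper's (there the lift is $p_0\circ h(t)$, which replaces every occurrence of $H$ by $H[left]=u(A)$). The gap is in the homotopy. The paper's homotopy needs no univalence at all: since $H$ is a type over $\Gamma, i : I$ with $H[i \repl left] = u(A)$, the interval structure already provides a line of types from $u(A)$ to $H$, namely $j \mapsto H[i \repl sq(j,i)]$, where $sq$ is the connection-like operation definable from $coe_1 + Path$. The $sq$ axioms give $H[i \repl sq(left,i)] \deq H[i \repl left]$ and $H[i \repl sq(right,i)] \deq H[i \repl i] = H$ \emph{definitionally}, so both endpoints are strict. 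This assignment on the generator $H$ is then extended to \emph{all} closed terms of the pushout theory by structural induction, using the regularity/stability of $T$ to lift every other function symbol along the extra interval variable. That induction is essential: $H$ can occur arbitrarily deep inside the terms making up $v(\Delta)$ (inside a $coe$, a $Path$, or any other symbol occurring in some $B_k$), not merely as a top-level type of the context, and your construction only prescribes what to do with type-level occurrences $H[\sigma]$ when forming the $C_k$.

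Your $iso$-based interpolation also does not deliver the strict endpoint at $j \repl right$ that $\sim^r$ requires ($p_1 \circ h = v$ on the nose). You correctly identify the obstacle --- the substitutions $\sigma'$ must be transported across the interpolating types --- but the fourth axiom of $wUA$ does not resolve it: that axiom computes $coe_0$ across the \emph{entire} $iso$-line from $left$ to $right$ and identifies it with the forward map; it says nothing about the restriction to $j \repl right$ of the intermediate coercions you insert into $\sigma'$, and in general those agree with the original terms only propositionally. So coherence terms do accumulate, and the claimed strict right homotopy is not established. Replacing the $iso$-interpolation by the $sq$-reparametrization, extended to all terms by structural induction, dissolves both problems at once.
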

\begin{proof}
Maps from $F(\{\,\Delta : (ctx,n)\,\})$ to $Y$ may be identified with elements of $Y_{(ctx,n)}$.
Since $Y$ is a pushout $X \amalg_{F(\{\,\Gamma \vdash A\ type\,\})} F(\{\,\Gamma, I \vdash H\ type\,\})$,
by \rrem{colimits}, its elements can be described as closed terms of theory
$T' = Lang(X) \cup \{\,H : (ty,k+1), \sststile{}{} ctx(H) = I(ctx^2(H)) \land O_{u(A)} = H[left]\,\}$,
where $u$ is the map $F(\{\,\Gamma \vdash A\ type\,\}) \to X$.

For every set of variables $V$ and every term $t \in Term_{T'}(V)_{(p,n)}$,
we construct a term $h(t) \in Term_{T'}(L(V))_{(p,n+1)}$, where $L(V) = \{\,x : (p,k+1)\ |\ (x : (p,k)) \in V\,\}$.
\begin{align*}
h(x) & = x \text{, if } x \in Var \\
h(O_a) & = I \times O_a \\
h(\emptyCtx) & = I \\
h(ft(t')) & = ft(h(t')) \\
h(ty(t')) & = ty(h(t')) \\
h(\sigma(t_1, \ldots t_k)) & = I \times ctx(t) \vdash \sigma_1(h(t_1), \ldots h(t_k)) \\
h(H) & = I \times ctx(t) \vdash subst(H, v_k, \ldots v_1, sq(v_{k+1}, v_0))
\end{align*}
where $\sigma_1$ is the lift of $\sigma$ which is obtained from the stability of $T$.

For every formula $\varphi \in Form_{T'}(V)$, we can define formula $h(\varphi) \in Form_{T'}(L(V))$ as follows:
\begin{align*}
h(t_1 = t_2) & = (h(t_1) = h(t_2)) \\
h(R(t_1, \ldots t_k)) & = R_1(h(t_1), \ldots h(t_k)) \\
h(\varphi_1 \land \ldots \land \varphi_n) & = h(\varphi_1) \land \ldots \land h(\varphi_n)
\end{align*}
It is easy to see that $h$ is stable under substitution.
Thus to prove that for every theorem $\varphi \sststile{}{V} \psi$ of $T'$, $h(\varphi) \sststile{}{L(V)} h(\psi)$
is also a theorem of $T'$, it is enough to show that this is the case for axioms.
If a formula $\varphi$ does not mention $H$, then $h(\varphi)$ coincides with the lifting of $\varphi$.
Hence if a sequent $\varphi \sststile{}{V} \psi$ does not use $H$, then $h(\varphi) \sststile{}{L(V)} h(\psi)$ is a theorem by stability.
The only axiom that mentions $H$ is $\sststile{}{} ctx(H) = I(ctx^2(H)) \land O_{u(A)} = H[left]$.
It is easy to see that applying $h$ to this axiom produces a theorem.

If $t$ is closed term, then $h(t)$ is also closed.
Thus for every element $t \in Y_{(p,n)}$, we defined an element $h(t) \in Y_{(p,n+1)}$ such that $ctx^n(h(t)) = I$.
Thus $h(t)$ defines an element of $P(Y)_{(p,n)}$.
Moreover, $p_1 \circ h(t) = t$ and $p_0 \circ h(t)$ factors through $X$,
which shows that $X \to Y$ has RLP up to $\sim^r$ with respect to objects of the form $F(\{\,\Delta : (p,n)\,\})$.
\end{proof}

\begin{lem}[Jhom]
Let $u : F(\{\,\Gamma \vdash A\ type\,\}) \to X$ be a map.
Let $Y = X \amalg_{F(\{ \Gamma \vdash A\ type \})} C_U(V)$ and $Y' = X \amalg_{F(\{ \Gamma \vdash A\ type \})} C'_U(V)$.
Then there exist maps $v : Y \to Y'$ and $v' : Y' \to Y$ such that $v' \circ v \sim^r id_Y$.
\end{lem}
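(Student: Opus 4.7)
The plan is to construct $v$ and $v'$ by repackaging the syntactic morphisms $\varphi$ and $\psi$ from subsection~\ref{sec:univalence} as maps of models, and then to convert the homotopy produced in the proof of \rlem{UA} into the right homotopy required here.

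I would first build $v : Y \to Y'$. By the universal property of the pushout $Y = X \amalg_{F(\{\,\Gamma \vdash A\ type\,\})} C_U(V)$, it is enough to give the canonical map $X \to Y'$ together with a morphism $C_U(V) = Syn(Eq + T) \to Y'$ agreeing on $F(\{\,\Gamma \vdash A\ type\,\})$. For the second, I interpret the generators of $Eq$ inside $Y'$ using the interval-indexed type $H$, following $\varphi$: set $v(A) = H[i \repl left]$, $v(B) = H[i \repl right]$, $v(b) = \lambda x.\,coe_2(\lambda i.\,H, left, x, right)$, $v(a_1) = v(a_2) = \lambda y.\,coe_2(\lambda i.\,H, right, y, left)$, and define $v(p), v(q)$ by the corresponding $coe_2$ expressions from the construction of $\varphi$. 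The gluing identity $H[i \repl left] = u(A)$ in $Y'$ secures compatibility with the inclusion of $X$.

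Next I would define $v' : Y' \to Y$ using the hypothesis that $T$ is under $wUA$. On $X$ take the inclusion, and interpret $H$ by $v'(H) = iso(A, B, \lambda x.\,b, \lambda y.\,a_1, \lambda x.\,p, \lambda y.\,q_1, i)$, with $q_1$ built from the data in $Y$ exactly as in the proof of \rlem{UA}. The $wUA$-axioms give $v'(H)[i \repl left] = A$, matching the pushout gluing.

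To produce the right homotopy $v' \circ v \sim^r id_Y$, I must exhibit a morphism $Y \to P(Y)$ whose projections under $p_0$ and $p_1$ are $v' \circ v$ and $id_Y$ respectively. By the pushout description, this splits into the reflexive path $x \mapsto I \times x$ on the $X$ part, together with a morphism $C_U(V) \to P(Y)$ on the remaining generators. For the latter I reuse the terms $h(x)$ constructed in the proof of \rlem{UA}: for each generator $x$ of $Eq$, by design $h(x)[i \repl left] = \psi(\varphi(x)) = v'(v(x))$, $h(x)[i \repl right] = x$, and the axioms of $Eq$ are preserved. The two branches match on $F(\{\,\Gamma \vdash A\ type\,\})$ since $h(A) = A$. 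The main obstacle will be the bookkeeping required to verify that the $coe_2$- and $iso$-based specifications genuinely assemble into morphisms of models of $T$ (not merely formal substitutions), and that the candidate right homotopy is compatible on the pushout; since the nontrivial syntactic work has already been done in \rlem{UA} and the $wUA$ subsection, this step should be largely mechanical, though it is where any subtlety with the regularity conditions on $T$ would surface.
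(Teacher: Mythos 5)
Your proposal matches the paper's proof: the paper likewise obtains $v$ and $v'$ from the theory morphisms $\varphi$ and $\psi$ via the pushout's universal property, and builds the right homotopy $Y \to P(Y)$ by taking $y \mapsto I \times y$ on the $X$ part and the terms $h(x)$ from the proof of \rlem{UA} on the generators of $Eq$, checking compatibility via $h(A) = A$ (so $h'(A) = I \times A$). The only detail you omit is the minor variable reindexing needed to move the interval variable of $h(x)$ into the position required by $P(Y)$, which is purely mechanical.
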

\begin{proof}
Maps $\varphi$ and $\psi$ that were defined in subsection~\ref{sec:univalence} induce maps $C_U(V) \to C'_U(V)$ and $C'_U(V) \to C_U(V)$, which induce maps $v$ and $v'$.
We can define a homotopy $h' : Y \to P(Y)$ between $v' \circ v$ and $id_Y$ as follows: $h'(y) = I \times y$ for every $y \in Y_{(p,k)}$
and $h'(x) = (i : I, \Delta \vdash subst(h(x), v_{n-1}, \ldots v_0, v_n))$ for every symbol $x$ of $Eq$,
where $h$ is a homotopy constructed in \rlem{UA}.
Note that $h'(A) = I \times A$, so $h'$ is a well-defined map by the universal property of pushouts.
\end{proof}

Now, we can complete the construction of the model structure:
\begin{thm}[main]
For every regular theory $T$ under $coe_1 + \sigma + Path + wUA$, sets $\I$ and $\J_\I$ and functor $P : \Mod{T} \to \Mod{T}$ satisfy conditions of \rthm{model-structures}.
\end{thm}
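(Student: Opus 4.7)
The plan is to walk through the hypotheses of \rthm{model-structures}. The preliminary items are routine: $\Mod{T}$ is complete and cocomplete with colimits computed as in \rrem{colimits}; the domains and codomains of maps in $\I$ are finitely presented free models, hence cofibrant by construction and small relative to $\Icell$ by standard local-presentability arguments; and the cylinder objects $C_U(V)$ were constructed explicitly above, where it was also verified that $[\cyli_0,\cyli_1] \in \Icof$ (a single $\I_{tm}$-pushout when $i \in \I_{tm}$; a five-step $\I_{tm}$-cell composition when $i \in \I_{ty}$).

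For the four properties of $P(X)$: condition (ii) is automatic since $P$ is defined as a functor and $t,p_0,p_1$ are natural; condition (iii) was written down explicitly via the inversion term $inv$; and condition (i), the RLP of $p_0$ with respect to $\I$, is verified by hand. Lifting $i_{(ty,n)}$ along $p_0$ amounts to extending a type over $\Gamma$ to one over the context obtained by appending $I$, solved by the weakening $A\!\uparrow$; lifting $i_{(tm,n)}$ amounts to extending a term $a : A'[x \repl left]$ to a term of $A'$ over $\Gamma, x : I$, solved by $coe^l_1$ applied at the variable $x$, whose $left$-face agrees with $a$ by the $\sigma$ rule. Both uses rely on the standing assumption that $T$ is under $coe_1 + \sigma + Path$.

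The essential content is condition (iv), which we handle by splitting $\J_\I = \J_{\I_{tm}} \cup \J_{\I_{ty}}$ and applying a different alternative on each half. For the $tm$-half, \rlem{Jtm} gives the first alternative: $\langle p_0, p_1 \rangle$ has RLP with respect to $\J_{\I_{tm}}$. For the $ty$-half we take the second alternative and show that maps in $\Jcell[_{\I_{ty}}]$ have RLP up to $\sim^{r*}$ with respect to the domains of $\I_{ty}$, i.e.\ the free models $F(\{\,\Gamma : (ctx,n)\,\})$. The zigzag is as follows: given a lifting problem against a pushout $Y$ of a map in $\J_{\I_{ty}}$, \rlem{Jhom} supplies $v: Y \to Y'$ and $v': Y' \to Y$ with $v' \circ v \sim^r id_Y$, where $Y'$ is the analogous pushout along a map in $\J'_{\I_{ty}}$; \rlem{Jty} solves the transported lifting problem up to $\sim^r$ against $X \to Y'$; composing with $v'$ and concatenating with the right homotopy $v' \circ v \sim^r id_Y$ yields a solution up to $\sim^{r*}$ for the original problem. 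Extending from a single pushout to a transfinite composite in $\Jcell[_{\I_{ty}}]$ is a standard small-object-style induction using transitivity of $\sim^{r*}$ and smallness of the domains. The main obstacle is precisely this coordination between the two different cylinder objects for $\I_{ty}$, one of which ($C_U(V)$) is a cofibration while the other ($C'_U(V)$) supports the direct filling argument of \rlem{Jty}; the homotopy equivalence from \rlem{Jhom} is exactly what lets us transport between them and assemble a single $\sim^{r*}$-lift. Everything else reduces either to an already-established lemma or to routine bookkeeping.
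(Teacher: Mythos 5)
Your handling of condition (iv) --- splitting $\J_\I$ into its $tm$- and $ty$-halves, invoking \rlem{Jtm} for the former, running the $v,v'$ zigzag between the two cylinder objects via \rlem{Jty} and \rlem{Jhom} for the latter, and reducing transfinite composites in $\Jcell[_{\I_{ty}}]$ to single pushouts using the finiteness of the objects $F(\{\,\Gamma : (ctx,n)\,\})$ --- is exactly the paper's argument, and your treatment of the preliminaries and of conditions (ii) and (iii) also matches.

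There is, however, a genuine error in your verification of condition (i) for the generators $i_{(ty,n)}$. An element of $P(X)_{(ctx,n)}$ is a context of the form $I, \Delta$ with the interval type \emph{first}, so the types in $\Delta$ may depend on the interval variable, and $p_0$ sends this context to $left^*(\Delta)$. The lifting problem is therefore: given $I, \Delta \vdash$ and a type $left^*(\Delta) \vdash A$, produce $I, \Delta \vdash H$ with $left^*(H) = A$. The contexts $I,\Delta$ and $left^*(\Delta)$ are related by the substitution $x \repl left$, not by weakening, so ``$A\!\uparrow$'' is not even well formed as a type over $I,\Delta$, let alone a solution. (Your phrase ``the context obtained by appending $I$'' suggests you are picturing $\Gamma, I$ with the interval outermost; the same slip appears in your $i_{(tm,n)}$ case, although there the tool you name --- the local $coe^l_1$ --- is the correct one, matching the paper's $Fill^1_{(tm,1+n)}(H,a,v_n)$.) The paper's construction for the type case takes $H = subst(A, b_1, \ldots b_n)$ with $b_i = coe^l_2(A_i, v_n, v_{n-i}, left)$, i.e.\ it coerces each variable of $\Delta$ at a generic interval point back into the $left$ fibre and substitutes the results into $A$; this is precisely where the derivability of the local $coe^l_2$ from $coe_1 + \sigma + Path$ is needed. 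With that replacement the rest of your argument goes through.
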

\begin{proof}
First, let us prove that $p_0 : P(X) \to X$ has RLP with respect to $\I$.
Indeed, given a type $I, \Delta \vdash H$ and a term $left^*(\Delta) \vdash a : left^*(H)$ in $X$,
we need to find a term $I, \Delta \vdash h : H$ such that $left^*(h) = a$.
We can define $h$ as $Fill^1_{(tm,1+n)}(H, a, v_n)$.
Given a context $I, \Delta$ and a type $left^*(\Delta) \vdash A$,
we need to find a type $I, \Delta \vdash H$ such that $left^*(H) = A$.
We can define $H$ as $subst(A, b_1, \ldots b_n)$, where $b_i = coe^l_2(A_i, v_n, v_{n-i}, left)$.

To prove the last condition, note that objects of the form $F(\{\,\Gamma : (ctx,n)\,\})$ are finite.
Thus it is enough to prove that they have LLP up to $\sim^{r*}$ with respect to pushouts of maps in $\J_\I$.
For pushouts of maps from $\J_{\I_{tm}}$, this follows from \rlem{Jtm}.
Let $f : X \to Y$ be a pushout of a map in $\J_{\I_{ty}}$ along a map $u : V \to X$ and let $f' : X \to Y'$ be the pushout of the corresponding map from $\J'_{\I_{ty}}$ along $u$.
Then \rlem{Jty} implies that for every $t : F(\{\,\Gamma : (ctx,n)\,\}) \to Y$, there exists a map $t' : F(\{\,\Gamma : (ctx,n)\,\}) \to X$ such that $f' \circ t' \sim^r v \circ t$.
By \rlem{Jhom}, $t \sim^r v' \circ v \circ t \sim^r v' \circ f' \circ t' = f \circ t'$.
Thus $t'$ is the required lifting.
\end{proof}

\subsection{Theories with sigma types}
\label{sec:sigma}

In this section we give several equivalent descriptions of weak equivalences between models of theories with $\Sigma$ types.
We also discuss the relationship between such models and fibration categories.

Recall that the theory of $\Sigma$ types with eta has the following rules:
\medskip
\begin{center}
\AxiomC{$\Gamma, A \vdash B\ type$}
\UnaryInfC{$\vdash \Sigma(A, B)\ type$}
\DisplayProof
\quad
\AxiomC{$\Gamma, A \vdash B\ type$}
\AxiomC{$\Gamma \vdash a : A$}
\AxiomC{$\Gamma \vdash b : B[a]$}
\TrinaryInfC{$\Gamma \vdash pair(A, B, a, b) : \Sigma(A, B)$}
\DisplayProof
\end{center}

\medskip
\begin{center}
\AxiomC{$\Gamma \vdash p : \Sigma(A, B)$}
\UnaryInfC{$\Gamma \vdash \pi_1(A, B, p) : A$}
\DisplayProof
\quad
\AxiomC{$\Gamma \vdash p : \Sigma(A, B)$}
\UnaryInfC{$\Gamma \vdash \pi_2(A, B, p) : B[\pi_1(A, B, p)]$}
\DisplayProof
\end{center}

\medskip
\begin{center}
\AxiomC{$\Gamma, A \vdash B\ type$}
\AxiomC{$\Gamma \vdash a : A$}
\AxiomC{$\Gamma \vdash b : B[a]$}
\TrinaryInfC{$\Gamma \vdash \pi_1(A, B, pair(A, B, a, b)) \deq a$}
\DisplayProof
\end{center}

\medskip
\begin{center}
\AxiomC{$\Gamma, A \vdash B\ type$}
\AxiomC{$\Gamma \vdash a : A$}
\AxiomC{$\Gamma \vdash b : B[a]$}
\TrinaryInfC{$\Gamma \vdash \pi_2(A, B, pair(A, B, a, b)) \deq b$}
\DisplayProof
\end{center}

\medskip
\begin{center}
\AxiomC{$\Gamma \vdash p : \Sigma(A, B)$}
\UnaryInfC{$\Gamma \vdash pair(A, B, \pi_1(A, B, p), \pi_2(A, B, p)) \deq p$}
\DisplayProof
\end{center}
We will usually omit first two arguments to $pair$, $\pi_1$ and $\pi_2$.

If $T$ has $\Sigma$ types and $X$ is a model of $T$, then every nonempty context of $X$ is isomorphic to a context of length 1.
Indeed, for every $(A_1, \ldots A_n \vdash) \in X_{(ty,n)}$, we define $\Sigma(A_1, \ldots A_n) \in X_{(ty,0)}$ as $\Sigma(A_1, \ldots \Sigma(A_{n-1},A_n))$.
Morphisms between $\Gamma$ and $\Sigma(\Gamma)$ are defined as follows:
$c : \Gamma \to \Sigma(\Gamma)$ is $pair(v_{n-1}, \ldots pair(v_1,v_0), \ldots))$ and $d : \Sigma(\Gamma) \to \Gamma$ is $b_0, \ldots b_{n-1}$,
where $b_{n-1} = \pi_2(\ldots \pi_2(v_0) \ldots )$, where $\pi_2$ is repeated $n-1$ times, and
for every $0 \leq i < n-1$, $b_i = \pi_1(\pi_2(\ldots \pi_2(v_0) \ldots ))$, where $\pi_2$ is repeated $i$ times,
It is easy to see that $c$ and $d$ are mutually inverse.

\begin{prop}[sigma-we-i]
Let $T$ be a theory under $coe_1 + \sigma + Path + wUA + \Sigma$.
Then a map between models of $T$ is a weak equivalence if and only if it has RLP up to $\sim_i$ with respect to $i = i_{(ty,0)}$ and $i = i_{(tm,1)}$.
\end{prop}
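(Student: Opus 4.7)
The forward direction is immediate since $\{i_{(ty,0)}, i_{(tm,1)}\} \subseteq \I$ and weak equivalences are defined to have RLP up to $\sim_i$ with respect to every $i \in \I$. For the reverse direction, I assume $f : X \to Y$ has RLP up to $\sim_i$ with respect to $i_{(ty,0)}$ and $i_{(tm,1)}$, and I must show the same for every $i_{(ty,n)}$ and every $i_{(tm,n)}$, $n \geq 0$. The crucial tool is the $\Sigma$-eta isomorphism $c, d$ recalled just before the proposition, which strictly identifies any nonempty length-$n$ context with a length-$1$ context in any model of $T$. Via this isomorphism, the $n \geq 1$ cases of $i_{(ty,n)}$ reduce to the case $n = 1$ and the $n \geq 1$ cases of $i_{(tm,n)}$ reduce directly to $i_{(tm,1)}$, so only $i_{(ty,1)}$ and $i_{(tm,0)}$ remain to be treated.

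For $i_{(ty,1)}$, I apply $\Sigma$-eta a second time, now on the type side: a type $A^Y$ over $B^Y$ in $Y$ is equivalent data to the total type $\Sigma(B^Y, A^Y)$ in the empty context together with a term $\Sigma(B^Y, A^Y) \vdash \pi_1 : B^Y\!\uparrow$ realizing the first projection. My plan is then to apply RLP for $i_{(ty,0)}$ to lift the total type to $C^X \in X$ with $f(C^X) \sim_i \Sigma(B^Y, A^Y)$; transport $\pi_1$ along the resulting $Eq$-cylinder homotopy using $coe$ to obtain a term over $f(C^X)$; and apply RLP for $i_{(tm,1)}$ (in the length-$1$ context $C^X$, target type $B^X\!\uparrow$) to lift this transported term to a term $\pi^X$ in $X$. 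I then recover $A^X$ over $B^X$ as the fiber of $\pi^X$, a type family well-defined using $\Sigma$ and path types. For $i_{(tm,0)}$, given $A^X$ in $X$ and $a^Y : A^Y$ in $Y$, I apply RLP for $i_{(tm,1)}$ with $\Gamma = I$ and target type $A^X\!\uparrow$ to lift the weakening $a^Y\!\uparrow$; substituting $left$ for the context variable both in the resulting term $b^X$ and in the interval-cylinder homotopy produces a closed term $a^X \repl b^X[left]$ of type $A^X$ together with a closed interval-path witnessing $f(a^X) \sim_i a^Y$, which is exactly the form required by the cylinder for $i_{(tm,0)}$.

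The main obstacle is the $i_{(ty,1)}$ case: the outputs of the two sub-lifts (an $Eq$-equivalence $f(C^X) \simeq \Sigma(B^Y, A^Y)$ in the empty context and an interval-homotopy $f(\pi^X) \sim$ transported $\pi_1$ over $C^X$) must be assembled into a single $Eq$-equivalence between $f(A^X)$ and $A^Y$ as types \emph{over $B^Y$}. The translation between the two kinds of cylinder uses $wUA$ (to convert interval paths into equivalences of types) together with careful bookkeeping of the $\Sigma$-eta correspondence between types over $B$ and display maps $C \to B$; the other cases are essentially direct translations along $c, d$ or direct substitutions.
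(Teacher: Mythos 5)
Your overall strategy for the type case --- encode a dependent type by its total space together with the projection to the base, lift those two pieces using $i_{(ty,0)}$ and $i_{(tm,1)}$, and recover the lift of the original type as a homotopy fiber $\Sigma(c : C^X,\, \pi^X(c) \idtype y)$ --- is the same as the paper's (whose $A'' = \Sigma(A', a \idtype v_1)$ is exactly this homotopy fiber), and your treatment of the term cases matches the paper's as well. But the step you yourself flag as ``the main obstacle'' is a genuine gap, and it is precisely the step where the paper does something you are missing. Before producing any lifts, the paper factors $f$ as a trivial cofibration followed by a fibration and invokes the fact that the fibration inherits the same lifting-up-to-homotopy properties; by 2-out-of-3 one may therefore assume $f$ is a fibration, hence has \emph{strict} RLP against $i_{(ty,0)}$ and $i_{(tm,1)}$. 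This makes the intermediate lifts exact on the nose ($f(A') = \Sigma(\Sigma(\Gamma), d^*(A))$ and $f(a) = \pi_1(v_0)$), so the only equivalence left to construct is the single one between the homotopy fiber of $\pi_1 : \Sigma(B,C) \to B$ and the actual fiber $C(y)$. In your version the lift of the total type comes only with an $Eq$-equivalence and the lift of the projection only with an interval homotopy, so you must additionally prove that homotopy fibers are invariant (up to explicit $Eq$-data: maps both ways plus the two homotopies) under homotopy of the map and under equivalence of the total space. Neither is provided, and neither is routine here: the paper explicitly warns that the standard arguments (e.g.\ the relevant lemmas of the HoTT book) are unavailable without $\Pi$ types, and even the one equivalence it does need is constructed by hand with $coe_1$, $path$, and the connection $sq_r$.

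Two smaller points. First, even with the fibration reduction you would still owe the explicit construction of the equivalence between the homotopy fiber of the projection and the fiber; your proposal treats this as automatic (``a type family well-defined using $\Sigma$ and path types''), but it is the technical heart of the paper's proof. Second, the conversion you attribute to $wUA$ goes the wrong way: $wUA$ turns equivalence data into a line of types over $I$, whereas turning an interval path of types into an equivalence is done with $coe_2$ (the map $\varphi$ of the univalence subsection). The correct tool is available under the hypotheses, but you should cite the right one. I recommend restructuring the converse direction as the paper does: first reduce to $f$ a fibration, then perform the strict lifts, then prove the single homotopy-fiber equivalence explicitly.
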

\begin{proof}
Since $i_{(ty,0)},i_{(tm,1)} \in \I$, the ``only if'' direction is obvious.
Let us prove the converse.
Let $f : X \to Y$ be a map which has RLP up to $\sim_{i_{(tm,1)}}$ with respect to $i_{(tm,1)}$.
Let $(\Gamma \vdash A) \in X_{(ty,n)}$ and $(f(\Gamma) \vdash a : f(A)) \in Y_{(tm,n)}$.
If $n = 0$, then there exists terms $I \vdash a' : A\!\uparrow$ and $I \vdash p : f(a') = a\!\uparrow$.
Then $\vdash a'[left] : A$ and $\vdash p[left] : f(a'[left]) = a$.
If $n > 0$, then there exists terms $\Sigma(\Gamma) \vdash a' : d^*(A)$ and $f(\Sigma(\Gamma)) \vdash p : f(a') : d^*(a)$.
Then $\Gamma \vdash c^*(a') : A$ and $f(\Gamma) \vdash c^*(p) : f(c^*(a')) = a$.
Thus $f$ has RLP up to $\sim_i$ with respect to every $i \in \I_{tm}$.

Now, suppose that $f$ also has RLP up to $\sim_{i_{(ty,0)}}$ with respect to $i_{(ty,0)}$.
Factor $f$ into a trivial cofibration $g$ followed by a fibration $f'$.
By \cite[lemma~3.4]{f-model-structures}, $f'$ has the same right lifting properties as $f$.
If we can prove that $f'$ is a weak equivalence, then $f$ is a weak equivalence as well by 2-out-of-3 property.
Thus we may assume that $f$ is a fibration.
Then $f$ has RLP with respect to $i_{(ty,0)}$ and $i_{(tm,1)}$.

Let $\Gamma \in X_{(ctx,n)}$ and $(f(\Gamma) \vdash A) \in Y_{(ty,n)}$.
Then there exists a type $A' \in X_{(ty,0)}$ such that $f(A') = \Sigma(\Sigma(\Gamma),d^*(A))$.
There exists a term $A' \vdash a : \Sigma(\Gamma)\!\uparrow$ such that $f(a) = \pi_1(v_0)$.
Now, consider type $(\Sigma(\Gamma) \vdash A'') \in X_{(ty,1)}$ which is defined as $\Sigma(A', a \idtype v_1)$.

Let us prove that there is an equivalence between $f(A'')$ and $d^*(A)$ in context $\Sigma(f(\Gamma))$.
Actually, this is a well-known fact.
For example, it follows from \cite[lemmas 3.11.8 and 3.11.9]{hottbook}.
But since we are working in a restricted context (we do not have $\Pi$ types), we will give a direct proof.

To simplify the notation, let $B = \Sigma(f(\Gamma))$ and $C(t) = d^*(A)[t]$.
A map from $f(A'')$ to $d^*(A)$ is a term $g$ which satisfies
\[ y : B, w : \Sigma(x : \Sigma(z : B, C(z)), \pi_1(x) \idtype y) \vdash g : C(y). \]
Let $t(i) = coe_1(\lambda i.\,C(at(\pi_2(w),i)), \pi_2(\pi_1(w)), i)$.
Then we can define a map $g$ from $f(A'')$ to $d^*(A)$ as $t(right)$.

A map from $d^*(A)$ to $f(A'')$ is a term $g'$ which satisfies
\[ y : B, c : C(y) \vdash g' : \Sigma(x : \Sigma(z : B, C(z)), \pi_1(x) \idtype y). \]
We can define such map $g'$ as $pair(pair(y,c),refl(y))$.
Then $g[g'] = c$ and we can construct a path between $g'[g]$ and $w$ as follows:
\[ path(\lambda i.\,pair(pair(at(\pi_2(w),i), t(i)), path(\lambda j.\,at(\pi_2(w),sq_r(i,j))))) \]

Thus $g$ and $g'$ define an equivalence between $f(A'')$ and $d^*(A)$.
Then $c^*(g)$ and $c^*(g')$ define an equivalence between $f(c^*(A''))$ and $A$.
Thus $\Gamma \vdash c^*(A'')$ is the required lift of $A$.
\end{proof}

If $T$ is under $HPath$, then for every model $X$ of $T$ we can define its homotopy category $Ho(X)$.
To define it, we need to introduce an equivalence relation on the set of terms.
We will say that terms $a,a' \in X_{(tm,n)}$ are equivalent if $ty(a) = ty(a')$ and there exists a term $p$ such that $ctx(a) \vdash p : a = a'$.
Objects of $Ho(X)$ are closed types, that is elements of $X_{(ty,0)}$.
For every $A,B \in X_{(ty,0)}$, morphisms from $A$ to $B$ are equivalence classes of terms $b \in X_{(tm,1)}$ such that $A \vdash b : B\!\uparrow$.
Identity morphism is $v_0$ and composition of $b : A \to B$ and $c : B \to C$ is $subst(c,b)$.
If $x : A \vdash p : b = b'$ and $y : B \vdash q : c = c'$, then $x : A \vdash path(\lambda i.\,at(c,c',q,i)[y \repl at(b,b',p,i)]) : c[y \repl b] = c'[y \repl b']$.
Thus composition is well-defined.

For every morphism $f : X \to Y$ of models of $T$, we define a functor $Ho(f) : Ho(X) \to Ho(Y)$ in the obvious way:
$Ho(f)(A) = f(A)$ for every object $A$, and $Ho(f)(b) = f(b)$ for every morphism $b$.
It is obvious that $Ho$ preserves identity morphisms and compositions.
Thus $Ho$ is a functor $\Mod{T} \to \cat{Cat}$.

If $T$ has $\Sigma$ types, then there is an equivalent characterization of weak equivalences in terms of the homotopy category.
This proposition is similar to \cite[Th\'eor\`eme~3.25]{cis10b}.
Actually, we can probably derive it from results of \cite{cis10b}, but it is easier to give a direct proof.

\begin{prop}[sigma-we-ho]
Let $T$ be a theory under $coe_1 + \sigma + Path + wUA + \Sigma$, and let $f : X \to Y$ be a morphism of models of $T$.
Then a map $f : X \to Y$ between models of $T$ is a weak equivalence if and only if $Ho(f)$ is an equivalence of categories.
\end{prop}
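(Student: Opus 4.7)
The plan is to apply \rprop{sigma-we-i} in both directions, translating each of the two lifting properties (against $i_{(ty,0)}$ and $i_{(tm,1)}$) into the corresponding categorical property of $Ho(f)$, namely essential surjectivity, fullness, and faithfulness.

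For the forward direction, suppose $f$ is a weak equivalence. For essential surjectivity of $Ho(f)$, take $B \in Y_{(ty,0)}$; the lifting problem for $i_{(ty,0)}$ produces a type $A \in X_{(ty,0)}$ together with a map from the cylinder $Syn(Eq + T)$ to $Y$ whose restrictions along $\cyli_0$ and $\cyli_1$ are $f(A)$ and $B$. Unpacking, this is exactly an $Eq$-structure in $Y$ between $f(A)$ and $B$, which gives an isomorphism $f(A) \cong B$ in $Ho(Y)$. For fullness, a morphism $[b] : f(A) \to f(B)$ in $Ho(Y)$ is represented by a term $f(A) \vdash b : f(B)\!\uparrow$; the RLP up to $\sim_{i_{(tm,1)}}$ against $i_{(tm,1)}$ yields a term $A \vdash b' : B\!\uparrow$ in $X$ together with a path in $Y$ from $f(b')$ to $b$, so $Ho(f)([b']) = [b]$. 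For faithfulness, if $[f(b_1)] = [f(b_2)]$ for terms $A \vdash b_1, b_2 : B\!\uparrow$ in $X$, one has a term $f(A) \vdash q : f(b_1 \idtype b_2)$ in $Y$; the same RLP applied to the type $b_1 \idtype b_2$ in $X$ produces $A \vdash q' : b_1 \idtype b_2$, whose mere existence forces $[b_1] = [b_2]$ in $Ho(X)$.

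For the converse, assume $Ho(f)$ is an equivalence. For the lifting property against $i_{(ty,0)}$: given $B \in Y_{(ty,0)}$, essential surjectivity provides $A \in X_{(ty,0)}$ and an isomorphism $f(A) \cong B$ in $Ho(Y)$; its witnesses (morphism, inverse, and the two coherence paths) assemble into an $Eq$-structure in $Y$, i.e., a map $Syn(Eq + T) \to Y$ realizing the required homotopy. For the lifting property against $i_{(tm,1)}$: given a type $A \vdash B$ in $X$ and a term $f(A) \vdash b : f(B)$ in $Y$, I would apply the $\Sigma$-trick to form $\Sigma(A, B) \in X_{(ty,0)}$. The term $b$ defines a section $[pair(v_0, b)] : f(A) \to f(\Sigma(A, B))$ of $[f(\pi_1)]$ in $Ho(Y)$. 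Fullness of $Ho(f)$ provides $[s] : A \to \Sigma(A, B)$ in $Ho(X)$ with $Ho(f)([s]) = [pair(v_0, b)]$, and faithfulness then forces $[\pi_1 \circ s] = [id_A]$, i.e., a path $A \vdash p : \pi_1(s) \idtype v_0$ in $X$. Transporting $\pi_2(s) : B[\pi_1(s)]$ along $p$ via $coe_1$ yields the desired term $A \vdash b' : B$, and the identification $Ho(f)([s]) = [pair(v_0, b)]$, together with the naturality of transport under $f$, unwinds to a path $f(b') \idtype b$ in $Y$.

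The hard part will be the converse direction for $i_{(tm,1)}$: although the categorical argument is transparent, extracting a concrete term $b'$ from the abstract morphism $[s]$ in $Ho(X)$ and producing an explicit witness that $f(b') \sim b$ in $Y$ requires carefully composing the transport $coe_1$ with the path in $Y$ from $f(s)$ to $pair(v_0, b)$, then projecting and matching types via the induced path on first components. I expect \rlem{squares-eq} together with the square-filling techniques developed in subsection~\ref{sec:univalence} will be needed to glue the resulting diagrams of paths into the single required homotopy.
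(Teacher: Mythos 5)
Your overall strategy coincides with the paper's: reduce to \rprop{sigma-we-i}, read off essential surjectivity from the lifting property against $i_{(ty,0)}$ (whose cylinder is literally $Syn(Eq+T)$), get fullness and faithfulness from the lifting property against $i_{(tm,1)}$, and in the converse direction use the $\Sigma$-trick to turn a term $f(A) \vdash b : f(B)$ into a section $pair(v_0,b)$ of the projection. The forward direction and the $i_{(ty,0)}$ half of the converse are fine as you state them.

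The gap is in the step you yourself flag as ``the hard part,'' and it is not merely a matter of careful diagram gluing. Faithfulness hands you \emph{some} path $A \vdash p : \pi_1(s) \idtype v_0$ in $X$, but you have no control over $f(p)$: it need not be compatible (even up to homotopy rel endpoints) with the first-component path extracted from the homotopy $f(s) \sim pair(v_0,b)$ in $Y$; the two can differ by a nontrivial loop at $v_0$. Consequently, transporting $\pi_2(s)$ along $p$ gives a term $b'$ whose image $f(b')$ is $\pi_2(f(s))$ transported along $f(p)$, and ``naturality of transport under $f$'' does not yield a path $f(b') \idtype b$ --- that would require $f(p)$ to be the right path, which nothing guarantees. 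The paper resolves this by first factoring $f$ so that it may be assumed a fibration (turning the homotopies $f(b') \sim pair(v_0,b)$ and $f(s') \sim pair(v_0,refl(v_0))$ into equalities), and then constructing the path $s : \pi_1(b') \idtype v_0$ not from faithfulness directly but by lifting the term $pair(v_0, refl(v_0))$ of type $\Sigma(A, \pi_1(b') \idtype v_0)$ through $f$; one then proves $f(s) \sim refl(v_0)$ using the decomposition of a path between elements of $\Sigma(A, v_0 \idtype v_0)$ into components $q_1 : \pi_1(p) = \pi_1(p')$ and $q_2 : sym(q_1) * \pi_2(p) * q_1 \idtype \pi_2(p')$. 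Only with this controlled path does the final transport $coe_0(\lambda i.\,B[at(s',i)], -)$ produce a lift mapping to $b$. You would need to add both the fibration reduction and this two-stage lifting (or an equivalent coherence argument) to close your proof.
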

\begin{proof}
Note that $Ho(f)$ is essentially surjective on objects if and only if $f$ has RLP up to $\sim_{i_{(ty,0)}}$ with respect to $\sim_{i_{(ty,0)}}$.

Assume that $f$ is a weak equivalence.
Let $A$ and $B$ be objects of $Ho(X)$, and let $f(A) \vdash b : f(B)\!\uparrow$ be a morphism of $Ho(Y)$.
Then there exists a term $A \vdash b' : B\!\uparrow$ such that $f(b')$ and $b$ are homotopic.
Hence $f(b')$ and $b$ are equals as morphisms of $Ho(Y)$, so $Ho(f)$ is full.
Let $b$ and $b'$ be terms such that $A \vdash b : B\!\uparrow$, $A \vdash b' : B\!\uparrow$ and there exists a term $p$ such that $f(A) \vdash p : f(b) \idtype f(b')$.
Then there exists a term $p'$ such that $A \vdash p' : b \idtype b'$.
Hence $Ho(f)$ is faithful.

Now, assume that $Ho(f)$ is an equivalence of categories.
By \rprop{sigma-we-i}, we just need to prove that $f$ has RLP up to $\sim_{i_{(tm,1)}}$ with respect to $\sim_{i_{(tm,1)}}$.
Factor $f$ intro a trivial cofibration $g$ followed by a fibration $f'$.
Since $Ho(g)$ is an equivalence, $f'$ is also an equivalence by 2-out-of-3 property.
If we can prove that $f'$ is a weak equivalence, then $f$ is a weak equivalence as well by 2-out-of-3 property.
Thus we may assume that $f$ is a fibration.

Let $A \vdash B$ be a type in $X_{(ty,1)}$, and let $b$ be a term in $Y_{(tm,1)}$ such that $f(A) \vdash b : f(B)$.
Since $Ho(f)$ is full, there exists a term $A \vdash b' : \Sigma(A,B)\!\uparrow$ such that $f(b')$ is homotopic to $pair(v_0,b)$.
Since $f$ is a fibration, we can assume that $f(b') = pair(v_0,b)$.
We need to find a term $s$ such that $A \vdash s : \pi_1(b') = v_0$ and $f(s)$ is homotopic to $refl(v_0)$.
If such term exists, then since $f$ is a fibration, there exists a term $s'$ such that $A \vdash s' : \pi_1(b') = v_0$ and $f(s') = refl(v_0)$.
Then we can define $A \vdash b'' : B$ as $coe_0(\lambda i.\,B[at(s',i)], \pi_2(b))$.
This $b''$ is the required lift since $f(b'') = b$.

It is easy to find a term $s$ which satisfies the first condition using the fact that $Ho(f)$ is faithful, but the second condition is more difficult.
Let us show how to construct a term which satisfies both of them.
Since $Ho(f)$ is full, there exists a term $A \vdash s' : \Sigma(A, \pi_1(b') \idtype v_0)\!\uparrow$ such that $f(s')$ is homotopic to $pair(v_0, refl(v_0))$.
Since $f$ is a fibration, we can assume that $f(s') = pair(v_0, refl(v_0))$.
Since $Ho(f)$ is faithful and $f(\pi_1 \circ s') = f(v_0)$, there exists a term $h$ such that $A \vdash h : \pi_1(s') \idtype v_0$.
Let $t(i) = coe_1(\lambda j.\,\pi_1(b')[at(h,j)] \idtype at(h,j), \pi_2(s'), i)$.
If we define $s$ as $t(right)$, then $A \vdash s : \pi_1(b') \idtype v_0$.

Let us prove that $f(s)$ is homotopic to $refl(v_0)$.
If we define $h'$ as
\[ path(\lambda i.\,pair(at(h,i), t(i))), \]
then $A \vdash h' : s' \idtype pair(v_0,s)$.
Hence, we have the following homotopy:
\[ f(A) \vdash f(h') : pair(v_0, refl(v_0)) \idtype pair(v_0, f(s)). \]
If we have $\Gamma \vdash p : \Sigma(A, v_0 \idtype v_0)$, $\Gamma \vdash p' : \Sigma(A, v_0 \idtype v_0)$ and $\Gamma \vdash h : p \idtype p'$,
then it is easy to see that there exist terms $\Gamma \vdash q_1 : \pi_1(p) = \pi_1(p')$ and $\Gamma \vdash q_2 : sym(q_1) * \pi_2(p) * q_1 \idtype \pi_2(p')$.
If we take $p = pair(v_0, refl(v_0))$, $p' = pair(v_0, f(s))$ and $h = f(h')$, then $f(A) \vdash q_2 : sym(q_1) * q_1 \idtype f(s)$,
which implies that $f(s)$ is homotopic to $refl(v_0)$.
\end{proof}

Let $T_\Sigma = coe_1 + \sigma + Path + wUA + \Sigma$.
Every model of $T_\Sigma$ carries the structure of a fibration category, which was proved in \cite{tt-fibr-cat}.
Let us briefly describe this construction.
If $X$ is a model of $T_\Sigma$, then we define category $U(X)$, which has contexts (that is, elements of $X_{(ctx,n)}$) as objects and context morphisms as morphisms.
A map $f : A \to B$ is a fibration if and only if it is isomorphic over $B$ to a map of the form $\pi_1 : \Sigma(B,C) \to B$.
Weak equivalences of $U(X)$ are homotopy equivalences.

The homotopy category $Ho(X)$ is equivalent to the localization of $U(X)$ with respect to homotopy equivalences.
Indeed, first note that since we have $\Sigma$ types, $U(X)$ is equivalent to its full subcategory on contexts of length $\leq 1$.
Let $Ho'(X)$ be the category which has contexts of length $\leq 1$ as objects,
and morphisms of $Ho'(X)$ are equivalence classes of maps of $U(X)$ with respect to the homotopy relation.
Then $Ho'(X)$ is equivalent to the localization of $U(X)$, which can be proved as usual (see, for example, \cite[Corollary~1.2.9]{hovey}).
Then $Ho(X)$ is a full subcategory of $Ho'(X)$.
The only object that $Ho(X)$ lacks is the empty context.
But it contains the interval type which is isomorphic to the empty context in the homotopy category
(both of them are terminal objects in $Ho'(X)$), so $Ho(X)$ is equivalent to $Ho'(X)$.

Thus, \rprop{sigma-we-ho} implies that a map $f : X \to Y$ of models is a weak equivalence if and only if
corresponding map $U(f)$ of fibration categories is a weak equivalence.

\bibliographystyle{amsplain}
\bibliography{ref}

\end{document}